\newtheorem{thm}{Theorem}
\newtheorem{lem}[thm]{Lemma}
\newtheorem{prop}[thm]{Proposition}
\newtheorem{rk}[thm]{Remark}
\newcommand{\R}{{\mathbb{R}}}
\newcommand{\N}{{\mathbb{N}}}
\newcommand{\Z}{{\mathbb{Z}}}
\newcommand{\cF}{{\mathcal{F}}}
\newcommand{\cC}{{\mathcal{C}}}
\newcommand{\cZ}{{\mathcal{Z}}}
\newcommand{\cL}{{\mathcal{L}}}
\newcommand{\e}{\epsilon}
\newcommand{\vip}{\vskip.15cm}
\newcommand{\indiq}{{\bf 1}}
\newcommand{\E}{\mathbb{E}}
\newcommand{\PR}{\mathbb{P}}
\newcommand{\intot}{\int_0^t }
\newcommand{\dd}{{\rm d}}
\newcommand{\ddiv}{{\rm div}}
\newcommand{\tV}{{\tilde V}}
\newcommand{\tX}{{\tilde X}}
\newcommand{\tB}{{\tilde B}}
\newcommand{\bu}{{\mathbf{u}}}
\begin{document}

\title{Simulated annealing in $\R^d$ with slowly growing potentials}

\author{Nicolas Fournier, Pierre Monmarch\'e and Camille Tardif}

\address{Nicolas Fournier and Camille Tardif : Sorbonne Universit\'e, 
LPSM-UMR 8001,
Case courrier 158, 75252 Paris Cedex 05, France. 
{\tt nicolas.fournier@sorbonne-universite.fr},
{\tt camille.tardif@sorbonne-universite.fr}.}

\address{Pierre Monmarch\'e : Sorbonne Universit\'e, LJLL-UMR 7598,
Case courrier 187, 75252 Paris Cedex 05, France. 
{\tt pierre.monmarche@sorbonne-universite.fr}.}


\subjclass[2010]{60J60}

\keywords{Simulated annealing, diffusion processes, large time behavior, 
slowly growing potentials}

\begin{abstract}
We use a localization procedure to weaken the growth assumptions 
of Royer \cite{r}, Miclo \cite{mic} and Zitt \cite{z}
concerning the continuous-time simulated annealing in $\R^d$.
We show that a transition occurs for potentials growing like
$a \log \log |x|$ at infinity. We also study a class of potentials
with possibly unbounded sets of local minima.
\end{abstract}

\maketitle

\section{Introduction and results}

\subsection{Notation and main result}

We adopt, in the whole paper, the following setting.

\vip

\noindent {\bf Assumption} $(A)$. {\it We work in dimension $d\geq 2$. 
The function $U:\R^d \to \R_+$ is of class $C^\infty$, 
satisfies $\lim_{|x|\to \infty} U(x)=\infty$, $\min_{x\in \R^d} U(x)=0$. 
For $x,y\in \R^d$, we set 
$$
E(x,y)=\inf \Big\{\max_{t\in [0,1]}U(\gamma(t))-U(x)-U(y) \;:\; 
\gamma \in C([0,1],\R^d), \gamma(0)=x,\gamma(1)=y \Big\}
$$
and we suppose that $c_*=\sup\{E(x,y) : x,y\in\R^d\}<\infty$.}

\vip

Actually,  $c_*=\sup\{E(x,y) : x$ local minimum of $U$, $y$ global minimum of $U\}$
and
represents the maximum energy required to reach a global minimum $y$ when starting 
from anywhere else.

\vip

We fix $x_0 \in \R^d$, $c>0$ and $\beta_0\geq 0$
and consider the time-inhomogeneous S.D.E.
\begin{equation}\label{eds}
X_t = x_0 + B_t - \frac 12\intot \beta_s \nabla U(X_s) \dd s \quad \hbox{where} 
\quad \beta_t=\beta_0+\frac{\log (1+t)}{c}.
\end{equation}
Here $(B_t)_{t\geq0}$ is a $d$-dimensional Brownian motion.
For $R>0$, we set $B(R)=\{x\in \R^d : |x| < R\}$.
We will work under one of the three following conditions.

\vip

\noindent {\bf Assumption} $(H_1(a))$. {\it There is $A_0\geq 2$ such that
$x \cdot \nabla U(x) \geq a/\log|x|$ for all $x\in \R^d \setminus B(A_0)$.}

\vip

\noindent {\bf Assumption} $(H_2(\alpha))$. {\it There are $\delta_0>0$ and three
sequences $(a_i)_{i\geq 1}$, $(b_i)_{i\geq 1}$ 
and $(\delta_i)_{i\geq 1}$ such that $0\leq a_1<b_1\leq a_2<b_2\leq \dots$
and, for all $i\geq 1$, $\delta_i \geq \delta_0$, $b_i \geq a_i+ \alpha \delta_i$,  and}
$$
|x| \in [a_i,b_i] \quad \Longrightarrow \quad 
\frac{x}{|x|}\cdot \nabla U(x) \geq \frac 1{\delta_i}.
$$

We say that a set $\cZ\subset \R^d$ is a {\it ring} if
it is $C^\infty$-diffeomorphic to $\cC=\{x\in \R^d : |x| \in (1,2)\}$. A ring $\cZ$
is connected, open, bounded and $\R^d\setminus\cZ$
has precisely two connected components, one begin bounded 
(denoted by $\cZ^-$), 
the other one being unbounded (denoted by $\cZ^+$).

\vip

\noindent {\bf Assumption} $(H_3(\alpha,\xi))$. {\it There are $\e>0$, three sequences
$(u_i)_{i\geq 1}$, $(v_i)_{i\geq 1}$ and $(\kappa_i)_{i\geq 1}$ and 
a family of rings $\{\cZ_i : i\geq 1\}$
such that $\cup_{i\geq 1} \cZ_i^-=\R^d$ and 
for all $i\geq 1$,  $(\cZ_i^+)^c \subset \cZ^-_{i+1}$, 
$v_i \geq u_i + \alpha \max\{1,\e\kappa_i\}$, 
$\partial \cZ_i^- \subset \{x\in\R^d : U(x)=u_i\}$, 
$\partial \cZ_i^+ \subset \{x\in\R^d : U(x)=v_i\}$ and}
$$
x\in \bar\cZ_i \quad \Longrightarrow \quad  
|\nabla U(x)| \in (0,\kappa_i]\;\; \hbox{ {\it and} }\;\; 
\frac{\Delta U(x)}{|\nabla U(x)|^2} \in (-\infty, \xi].
$$  

Our main result is as follows.

\begin{thm}\label{mr}
Assume $(A)$ and fix $c>c_*$ and $\beta_0\geq 0$.
Assume either $(H_1(a))$ for some $a>c(d-2)/2$ or $(H_2(\alpha))$ for some $\alpha>c$
or $(H_3(\alpha,\beta_0))$ for some $\alpha>c$.
The S.D.E. \eqref{eds} has a pathwise unique solution 
$(X_t)_{t\geq 0}$ and 
$U(X_t)$ tends to $0$, in probability, as $t\to \infty$.
\end{thm}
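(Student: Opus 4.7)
The strategy is a localization argument reducing Theorem \ref{mr} to the essentially classical simulated annealing convergence theorem for fast-growing potentials, combined with three separate exit-time estimates, one per hypothesis. Fix $R>A_0$ large enough that $B(R-1)$ contains the sublevel set $\{x:U(x)\leq 2c_\ast\}$, and extend $U|_{B(R-1)}$ smoothly to a globally quadratic potential $\tV_R$; then $\tV_R$ satisfies $(A)$ with a constant $c_\ast^R\leq c_\ast$, so that $c>c_\ast^R$ still holds. The SDE obtained by replacing $U$ by $\tV_R$ in \eqref{eds} falls under the Royer--Miclo--Zitt framework, hence admits a unique strong solution $(\tX_t)_{t\geq 0}$ with $\tV_R(\tX_t)\to 0$ in probability as $t\to\infty$. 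Coupling $\tX$ and $X$ through the same Brownian motion and using pathwise uniqueness up to the explosion time of $X$, the two processes coincide up to $\tau_R=\inf\{t\geq 0:|X_t|\geq R-1\}$, whence for every $\eta>0$,
$$
\PR\bigl(U(X_t)>\eta\bigr)\leq \PR\bigl(\tV_R(\tX_t)>\eta\bigr)+\PR(\tau_R\leq t).
$$
Producing, for each $\eta>0$, a function $R=R(t,\eta)\to\infty$ with $\PR(\tau_{R(t,\eta)}\leq t)\to 0$ therefore suffices: this simultaneously rules out explosion of $X$ (yielding global pathwise well-posedness) and delivers the convergence in probability.

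The technical heart is the exit estimate, treated separately under each of the three hypotheses, and I expect it to be the main obstacle. The guiding heuristic is Arrhenius-type: at inverse temperature $\beta_s\sim c^{-1}\log s$, a potential barrier of height $\alpha$ takes polynomial time $\sim s^{\alpha/(2c)}$ to cross, so barriers strictly higher than $c$ are unlikely to be overcome polynomially. Under $(H_2(\alpha))$ each shell $\{|x|\in[a_i,b_i]\}$ carries an energy rise of at least $\alpha$, because $(x/|x|)\cdot\nabla U\geq \delta_i^{-1}$ on the shell and its radial width is at least $\alpha\delta_i$; a Freidlin--Wentzell / Kramers type estimate then bounds the probability of crossing shell $i$ outward before time $t$ by a power of $t$ that is summable thanks to $\alpha>c$, and summing over the shells needed to reach radius $R(t)$ yields the required bound. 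The argument under $(H_3(\alpha,\beta_0))$ is analogous, with the rings $\cZ_i$ replacing the shells and the bound $\Delta U/|\nabla U|^2\leq \beta_0$ controlling the It\^o correction when changing variables to $Y_t=U(X_t)$ across each ring.

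Under $(H_1(a))$ no isolated barrier is available, so I would instead use a global radial Lyapunov function $V(x)=\varphi(|x|)$ with $\varphi(r)=(\log r)^\gamma$ for an appropriate $\gamma>0$. A direct computation together with $(H_1(a))$ gives, for $r=|x|\geq A_0$,
$$
\cL_t V(x)\leq \frac{\gamma(\log r)^{\gamma-2}}{2r^2}\bigl[(d-2)\log r+(\gamma-1)-a\beta_t\bigr],
$$
where $\cL_t$ denotes the time-dependent generator of \eqref{eds}. This is negative throughout $\{|x|\leq \exp(a\beta_t/(d-2))\}\sim\{|x|\leq t^{a/(c(d-2))}\}$, and the assumption $a>c(d-2)/2$ is precisely what makes this domain contain a ball of radius $t^{1/2+\varepsilon}$ for some $\varepsilon>0$, strictly exceeding the Brownian diffusive scale. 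The supermartingale-type behaviour of $V(X_t)$ should then be turned into an exit bound for $\tau_R$ via Doob's maximal inequality with a suitable slowly growing $R(t)$. I suspect that converting this sharp Lyapunov bound into an exit estimate matching the critical exponent $c(d-2)/2$ will be the most delicate part of the whole argument; once it is in hand, combining the three cases with Step~1 above completes the proof.
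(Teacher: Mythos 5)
Your localization step is in the same spirit as the paper's: extend $U$ outside a large ball so as to land in a setting where convergence is known, couple the two SDEs by pathwise uniqueness until the exit time $\tau_R$, and then reduce everything to an exit estimate. (The paper realizes this by wrapping $U$ onto a flat torus and invoking Holley--Kusuoka--Stroock as a black box; your quadratic extension plus Royer--Miclo--Zitt would serve the same purpose.) One small simplification worth noting: a diagonal choice of $R(t,\eta)$ is unnecessary. Since $\PR(\tau_R\leq t)$ is nondecreasing in $t$, the bound $\PR(U(X_t)>\eta)\leq\PR(\tV_R(\tX_t)>\eta)+\PR(\tau_R\leq t)$ already gives $\limsup_t\PR(U(X_t)>\eta)\leq\PR(\tau_R<\infty)$ for each fixed $R$, so the whole matter reduces to showing $\PR(\tau_R<\infty)\to 0$ as $R\to\infty$, i.e.\ $\sup_t|X_t|<\infty$ a.s. That is exactly Lemma~\ref{lemzero}--(ii) of the paper.

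The genuine gap is that the exit estimate itself is only sketched, and the routes you propose diverge substantially from the one that actually closes the argument. A key structural ingredient in the paper --- used under all three hypotheses --- is Lemma~\ref{lemzero}--(i): for a bounded domain $G$ with $\partial G\subset\{U\geq\alpha\}$ and $\alpha\in(c_*,c)$, the process has \emph{strictly positive probability of staying in $G$ for all time}, obtained by comparison with the compact-manifold result. This ``trap probability'' result is then iterated. Under $(H_2)$/$(H_3)$, the paper does not sum Kramers-type probabilities over shells; instead, it shows (via a radial comparison with an explicit one-dimensional SDE, Lemmas~\ref{troc} and~\ref{troc3}) that each shell is \emph{never} crossed with a probability bounded below uniformly in $i$, and concludes by a product bound $\PR(S_i<\infty)\leq(1-p)^{i-i_0}$. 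A Freidlin--Wentzell or Kramers estimate at a fixed time $t$ is a different kind of statement; it would have to be restitched into a uniform-in-time iteration to give $\sup_t|X_t|<\infty$ a.s., which is exactly what the paper's lemma already delivers.

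Under $(H_1)$, the gap is larger and you acknowledge it. Your Lyapunov computation for $V=(\log r)^\gamma$ is correct and does locate the threshold $a>c(d-2)/2$, but turning it into an exit estimate via Doob's inequality is circular: the supermartingale property holds only on the (time-growing) negative-drift region, so to apply Doob one must already know the process stays inside it. The paper circumvents this in a two-step Bessel comparison that is qualitatively different from a Lyapunov argument. First, dominating $|X_t|$ by a Bessel process of dimension $d$ gives $\log|X_t|\lesssim\tfrac12\log t$; feeding this back into the drift shows that the effective Bessel dimension of $|X_t|$ eventually drops below $2$, so the process a.s.\ returns to $B(A_0)$ (Lemma~\ref{lemh11}). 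Second, each return gives a uniform positive probability of staying in a fixed larger ball forever (Lemma~\ref{lemh12}, again via the HKS trap bound), and iterating these two facts yields $\sup_t|X_t|<\infty$ a.s. Your plan contains neither the bootstrap to a subcritical Bessel comparison nor the recurrence-plus-trap iteration, and I do not see how the Lyapunov/Doob route alone can reach the sharp exponent without recovering some version of those two steps.
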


It is well-known that, even with a fast growing potential, the condition $c>c_*$ 
is necessary, see Holley-Kusuoka-Stroock \cite[Corollary 3.11]{hks} 
for the case where $\R^d$ 
is replaced by a compact manifold. 
The following example shows that in some sense, $(H_1(a))$ is sharp.

\begin{prop}\label{contrex}
Assume that $d\geq 3$. Fix $\beta_0=0$, $c>0$ and $a\in (0,c(d-2)/2)$.
For $\alpha \in (a,c(d-2)/2)$, set $U(x)=\alpha \log(1+\log(1+|x|^2))$, which satisfies $(A)$ 
with $c_*=0$ and $(H_1(a))$. 
For any $x_0\in\mathbb R^d$, the solution $(X_t)_{t\geq 0}$ to \eqref{eds} 
satisfies $\PR(\lim_{t\to \infty}U(X_t)=\infty)>0$.
\end{prop}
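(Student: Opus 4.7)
The guiding heuristic is that the radial process $R_t:=|X_t|$ behaves asymptotically like a Bessel process of ``effective dimension'' $d-2\alpha/c$, which by hypothesis is strictly greater than $2$; such a process is transient with positive probability, and since $U$ is radial and strictly increasing this forces $U(X_t)\to\infty$ on the escape event. The hypotheses $(A)$ and $(H_1(a))$ are immediate to check: $U$ is smooth and radial with minimum $0$ at the origin, so $c_*=0$ and $(A)$ holds; the computation $x\cdot\nabla U(x)=2\alpha|x|^2/[(1+|x|^2)(1+\log(1+|x|^2))]\sim\alpha/\log|x|$ at infinity yields $(H_1(a))$ since $\alpha>a$. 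By It\^o's formula and L\'evy's characterization, $R_t$ solves $\dd R_t=\dd\tW_t+[(d-1)-\beta_t R_t U'(R_t)]/(2R_t)\,\dd t$ with $R_tU'(R_t)\sim\alpha/\log R_t$; at the Brownian scale $R_t\sim t^{1/2}$ this produces effective drift $(d-1-2\alpha/c)/(2R_t)$, corresponding to a Bessel process of dimension $d-2\alpha/c>2$.

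The core construction is a nonnegative Lyapunov function $V(x,t):=g(t)L(|x|)^{-\mu}$, where $L(r):=1+\log(1+r^2)$, for a parameter $\mu>0$ and a nondecreasing positive function $g$ to be chosen. A direct It\^o computation shows, for $|x|$ large,
$$
\cL_t[L^{-\mu}](x)\sim\frac{\mu L^{-\mu-2}}{|x|^2}\bigl[2(\mu+1+\alpha\beta_t)-(d-2)L\bigr].
$$
Fix $\kappa>0$ small enough that $\gamma:=(1+\kappa)\alpha/[c(d-2)]<1/2$---possible exactly because $\alpha<c(d-2)/2$---and set $r_0(t):=C_0(1+t)^\gamma$ for a large constant $C_0$. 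Then on $\{|x|\geq r_0(t)\}$ the bracket is at most $-\kappa(d-2)L/(1+\kappa)$, giving $\cL_t[L^{-\mu}]\leq-c_\kappa L^{-\mu-1}/|x|^2$ for some $c_\kappa>0$. Choosing $g$ with $g'(t)/g(t)\leq c_\kappa/[r_0(t)^2 L(r_0(t))]\asymp(1+t)^{-2\gamma}/\log(1+t)$---a constraint that, since $2\gamma<1$, allows $\log g(t)$ to grow like $(1+t)^{1-2\gamma}/\log(1+t)$---one gets $(\cL_t+\partial_t)V\leq 0$ on $\{|x|\geq r_0(t)\}$.

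Set $\tau:=\inf\{t\geq 0:|X_t|\leq r_0(t)\}$. The stopped process $V(X_{t\wedge\tau},t\wedge\tau)$ is a nonnegative supermartingale, and on $\{\tau<\infty\}$, $V(X_\tau,\tau)=g(\tau)L(r_0(\tau))^{-\mu}$; optional stopping yields $\PR(\tau\leq T)\cdot\inf_{s\leq T}V(r_0(s),s)\leq V(x_0,0)$. Since the function $s\mapsto V(r_0(s),s)$ is continuous, strictly positive, and tends to $+\infty$ (the super-polynomial growth of $g$ beats the logarithmic decay of $L(r_0(s))^{-\mu}$), the infimum $m:=\inf_{s\geq 0}V(r_0(s),s)$ is strictly positive, and the bound gives $\PR(\tau=\infty)\geq 1-V(x_0,0)/m>0$ whenever $V(x_0,0)<m$, which holds for $|x_0|$ large since $V(x_0,0)=g(0)L(|x_0|)^{-\mu}\to 0$ as $|x_0|\to\infty$. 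Arbitrary starting points are handled via non-degeneracy of the noise: for any $M$, $\PR(|X_T|>M)>0$ for some $T$, and the Markov property lets us restart. On $\{\tau=\infty\}$, $|X_t|\geq r_0(t)\to\infty$, so $U(X_t)\to\infty$. The main technical difficulty lies in this Lyapunov calibration: the strict inequality $\alpha<c(d-2)/2$ is used \emph{precisely} so that $\gamma<1/2$ is achievable, which then lets $g$ grow fast enough to make $V(r_0(\cdot),\cdot)$ diverge and render the optional-stopping bound meaningful.
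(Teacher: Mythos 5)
There is a genuine gap at the heart of your construction: the claimed supermartingale property fails. For $V(x,t)=g(t)L(|x|)^{-\mu}$ you need $(\partial_t+\cL_t)V\leq 0$ at \emph{every} point of the region $\{|x|\geq r_0(t)\}$ where the stopped process $V(X_{t\wedge\tau},t\wedge\tau)$ lives, i.e. you need $g'(t)/g(t)\leq c_\kappa/[L(|x|)\,|x|^2]$ for \emph{all} $|x|\geq r_0(t)$. The right-hand side is decreasing in $|x|$ and tends to $0$ as $|x|\to\infty$, so its infimum over the region is $0$: your condition $g'(t)/g(t)\leq c_\kappa/[r_0(t)^2L(r_0(t))]$ controls only the easiest point $|x|=r_0(t)$ (where the dissipation is largest), not the hard regime $|x|\gg r_0(t)$, where $g'(t)L^{-\mu}$ dominates $c_\kappa g(t)L^{-\mu-1}/|x|^2$ and the drift of $V$ is strictly positive. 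This is not a fixable calibration detail within your scheme: any nondecreasing, unbounded $g$ (which you need, since $L(r_0(s))^{-\mu}\to0$ would otherwise make the optional-stopping bound vacuous) violates the drift inequality on the unbounded part of the region, because $\cL_t[L^{-\mu}]/L^{-\mu}$ is only of order $-1/(|x|^2\log|x|)$ and has supremum $0$ over $\{|x|\geq r_0(t)\}$. Introducing an additional outer stopping boundary does not help either, since making the drift inequality hold up to a boundary $r_1(t)$ forces $g$ to be essentially bounded, reinstating the vacuity problem. (Your verification of $(A)$, $(H_1(a))$ and the heuristic ``effective dimension $d-2\alpha/c>2$'' are fine; the restart argument for general $x_0$ also needs more care because the dynamics is time-inhomogeneous, but that is minor compared to the main flaw.)

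For comparison, the paper implements the same heuristic by a pathwise comparison rather than a Lyapunov function: writing $S_t=|X_{t+1}|$ via It\^o, one compares $S$ from below with a Bessel process $R$ of dimension $\delta'\in(2,\delta)$, $\delta=d-2\alpha/c$. The time-dependent drift $-\frac{\alpha}{c}\frac{S_t\log(2+t)}{(1+S_t^2)(1+\log(1+S_t^2))}$ is tamed using Motoo's lower growth estimate (Proposition \ref{rap}-(d)): on an event of probability close to $1$, $\log(2+t)\leq(1+\eta)(1+\log(1+R_t^2))$ for all large $t$, which makes the drift of $S$ dominate that of $R$ whenever $S\geq R\geq 1$; a Gronwall-type differential inequality then keeps $S_t>R_t$ for all $t\geq K$ with positive probability, and transience of $R$ gives $U(X_t)\to\infty$ on that event. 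If you want to salvage a Lyapunov-type route, you would need a function whose dissipation does not degenerate as $|x|\to\infty$ relative to the function itself (the moving-boundary comparison in the paper is, in effect, the clean substitute for this).
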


The next example shows that one can build some 
{\it oscillating} potentials, growing more or
less as slow as one wants, such that Theorem \ref{mr} applies. Hence in some sense,
$(H_1(a))$ is far from being satisfying.

\begin{prop}\label{contrex2}
Fix $d\geq 2$ and $p\geq 1$. We can find $U$ satisfying $(A)$ with $c_*=1$ 
and $(H_2(2))$ such
that $\log^{\circ p} |x| \leq  U(x) \leq 3 \log^{\circ p} |x|$ outside a compact.
Theorem \ref{mr} applies when $c\in (1,2)$.
\end{prop}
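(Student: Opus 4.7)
The plan is to construct $U$ radially as $U(x)=V(|x|)$ for a piecewise-linear-then-smoothed profile $V:[0,\infty)\to\R_+$. Set $g(r)=\log^{\circ p}(r)$ and fix $\delta_0>0$ small enough that $1/\delta_0$ exceeds $\sup_{r\geq a_2}g'(r)$. For $i\geq 2$, let $a_i=\exp^{\circ p}(i-1)$ (so $g(a_i)=i-1$) and $b_i=a_i+2\delta_0$. Define $V$ on $[a_2,\infty)$ as follows: on each $[a_i,b_i]$, $V$ increases linearly from $i-1$ to $i+1$ with slope $1/\delta_0$; on each $[b_i,a_{i+1}]$, $V$ decreases linearly from $i+1$ to $i$ (with negligible slope, since $a_{i+1}-b_i$ grows like $\exp^{\circ p}(i)$). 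On $[0,a_2]$ I extend $V$ smoothly and monotonically from $V(0)=0$ to $V(a_2)=1$ with all odd derivatives vanishing at $0$, so that $U$ is $C^\infty$ on $\R^d$. Finally, mollify $V$ in small neighborhoods (of size much smaller than $\delta_0$) of the corners $a_i,b_i$ to obtain a global $C^\infty$ function, perturbing heights by an arbitrarily small amount.

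To verify $(H_2(2))$ I take $\delta_i=\delta_0$ for every $i\geq 2$: on $[a_i,b_i]$ the radial derivative $V'$ equals $1/\delta_0=1/\delta_i$, and $b_i-a_i=2\delta_0=2\delta_i$, so all conditions hold with $\alpha=2$. The envelope $g\leq V\leq 3g$ outside a compact follows from three observations: on $[a_i,b_i]$, $V(a_i)=g(a_i)$ and $V'=1/\delta_0>g'$, so $V\geq g$; on $[b_i,a_{i+1}]$, the function $V-g$ is decreasing ($V$ decreases, $g$ increases) and vanishes at $a_{i+1}$, so $V\geq g$; and $V\leq i+1\leq 3(i-1)=3g(a_i)\leq 3g(r)$ on $[a_i,a_{i+1}]$ for every $i\geq 2$.

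The delicate step is $c_*=1$. After smoothing, the local minima of $V$ are at $r=0$ (value $0$, unique global minimum) and at each $a_i$ for $i\geq 2$ (value $i-1$); the local maxima are at the $b_i$ with value $i+1$. Since $U$ is radial, tangential motion is free, so the optimal path between any two points can be taken radial, and detours through the origin or to infinity only increase $\max U$. Hence, for $|x|\leq|y|$, $E(x,y)=\max_{r\in[|x|,|y|]}V(r)-V(|x|)-V(|y|)$. By the remark in the introduction it suffices to consider $y=0$ and $x$ a local minimum: for $|x|=a_j$ with $j\geq 3$,
\[
E(x,0)=\max_{r\in[0,a_j]}V(r)-(j-1)=V(b_{j-1})-(j-1)=j-(j-1)=1,
\]
while $E(x,0)=0$ for $|x|=a_2$. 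Thus $c_*=1$ exactly, and Assumption $(A)$ is satisfied ($U$ is $C^\infty$, nonnegative, tends to $\infty$ since $V\geq g$, attains its minimum $0$ at the origin). For any $c\in(1,2)$ we have $c>c_*=1$ and $\alpha=2>c$, so Theorem \ref{mr} applies and gives $U(X_t)\to 0$ in probability.

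The main obstacle is balancing two constraints that pin down the same heights $V(a_i),V(b_i)$: the slope requirement $V'\geq 1/\delta_i$ on an interval of length $\geq 2\delta_i$ forces each bump to have amplitude at least $2$, while $c_*=1$ demands that the barrier above any local minimum exceed it by at most $1$. This works only because consecutive local minima rise by exactly one unit, absorbing one of the two units of each bump, and the mollification at corners must be fine enough to preserve these equalities.
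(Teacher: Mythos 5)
Your construction follows the same route as the paper (a radial staircase with steep bumps at the scales $\exp^{\circ p}(i)$ and a slow decrease in between, smoothed, with $c_*$ given by the bump overshoot and $(H_2(2))$ read off the steep stretch), but your numerical choices are tight in a way that makes the smoothing step genuinely impossible, not merely delicate. For a $C^1$ radial profile, the interval $[a_i,b_i]$ used in $(H_2(2))$ must lie strictly inside an increasing stretch, since the radial derivative is $\geq 1/\delta_i>0$ there while any interior local extremum of a smooth function has derivative $0$. Hence $V(b_i)\geq V(a_i)+2$, the local maximum following the bump is strictly larger than $V(b_i)$, and the infimum of $V$ over the valley preceding the bump is strictly smaller than $V(a_i)$ (because $V'(a_i)>0$ forces $V<V(a_i)$ immediately to the left). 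So every bump of a smooth potential satisfying $(H_2(2))$ must rise by strictly more than $2$ between the preceding local minimum and the following local maximum. Your profile pins this rise at exactly $2$ (values $i-1$ at $a_i$, $i+1$ at $b_i$, slope exactly $1/\delta_0$ over a length exactly $2\delta_0$). Consequently, after any $C^\infty$ regularization one of two things happens: either the extremal heights are (essentially) preserved, and then no admissible choice of $a_i,b_i,\delta_i$ can verify $(H_2(2))$, since any such interval would need a rise of at least $2$ strictly inside a bump of amplitude at most $2$; or you force the slope $1/\delta_0$ up to the corners, and then the local minima dip below $i-1$ and the local maxima overshoot $i+1$, so that the barrier above each local minimum exceeds it by strictly more than $1$, giving $c_*>1$ and ruining both the claim $c_*=1$ and the applicability of Theorem \ref{mr} for $c$ close to $1$. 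Your closing remark that the mollification ``must be fine enough to preserve these equalities'' is precisely where the argument breaks: no mollification, however fine, can preserve them, because a smooth function cannot have a local minimum at a point where its derivative is $1/\delta_0>0$.

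The repair is exactly the paper's choice of parameters: give each bump amplitude $3$ while consecutive minima rise by $2$ (so the barrier above each local minimum still exceeds it by exactly $1$, whence $c_*=1$), require the smooth profile to agree with the piecewise-linear one at the extreme points and to have the same table of variations (so the extremal values are exact rather than ``perturbed by an arbitrarily small amount''), and verify $(H_2(2))$ only on an interior sub-interval of the steep stretch where the slope is untouched: in the paper, slope $3=1/\delta_i$ with $\delta_i=1/3$ on $[u_i+0.1,\,v_i-0.1]$, whose length $0.8$ strictly exceeds $2\delta_i=2/3$. With $u_i=\exp^{\circ p}(i)$ and $U\in[2i,2i+3]$ for $|x|\in[u_i,u_{i+1}]$, the bounds $\log^{\circ p}|x|\leq U(x)\leq 3\log^{\circ p}|x|$ outside a compact follow as in your last step.
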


\subsection{Motivation and bibliography}

The problem under consideration, called {\it simulated annealing}, 
has a long history, see the introduction of Zitt \cite{z}. The goal is to find
numerically a global minimum of a given function $U:\R^d\to \R$, by using a gradient 
approach, perturbed by a stochastic noise.
One thus considers the S.D.E. $\dd Y_t = \sqrt{\sigma_t}\dd B_t -\frac12\nabla U(Y_t)\dd t$.
The noise intensity $\sigma_t$ has to be small, so that there is some hope to
spend most of the time close to a global minimum, but large enough so that
one is sure not to remain stuck close to a local minimum.
Changing time, one can equivalently study $(Y_t)_{t\geq 0}$ or the solution
$(X_t=Y_{\rho_t})_{t\geq 0}$ to \eqref{eds} with $\beta_t=1/\sigma_{\rho_t}$, where $(\rho_t)_{t\geq 0}$
is the inverse of $(\int_0^t \sigma_s \dd s)_{t\geq 0}$. The important point is that 
for $c>0$ fixed, as $t\to\infty$,  $\beta_t \sim c^{-1}\log t$ if and only if $\sigma_t \sim c(\log t)^{-1}$.
In each of the the references
cited below, one choice or the other is used.

\vip

After a first partial result by Chiang-Hwang-Sheu \cite{chs}, this question
has been solved by Royer \cite{r} and Miclo \cite{mic} when assuming that $U$ grows
sufficiently fast at infinity, always assuming at least that 
\begin{equation}\label{bg}
\lim_{|x|\to \infty} U(x)= \lim_{|x|\to \infty} |\nabla U(x)|=\infty \quad \hbox{and}
\quad \forall \; x\in\R^d, \quad \Delta U(x) \leq C + |\nabla U(x)|^2
\end{equation}
for some constant $C>0$. The case where $\R^d$
is replaced by a compact Riemannian manifold was solved by 
Holley-Kusuoka-Stroock \cite{hs,hks}.
All these studies deeply rely on some 
Poincar\'e and log-Sobolev inequalities that require, in the non-compact case,
some conditions like \eqref{bg}.

\vip

These conditions \eqref{bg} imply that all the local minima of $U$ are
located in a compact set. Also, if  $U$ behaves like $U(x)=|x|^r$
for some some $r>0$ outside a compact, then  \eqref{bg} holds true if and only if
$r>1$.
In \cite{z}, Zitt weakens the condition \eqref{bg}, 
using similar (but more involved) functional analysis methods, relying on 
some weak Poincar\'e inequalities.
However, many technical conditions are still assumed, which in particular
imply that all the local minima of $U$ are
located in a compact set, and that $U(x) \geq [\log |x|]^{r}$ outside a compact,
for some $r>1$.

\vip

The questions we address in this paper are thus the following. First, can one
find the {\it minimum} growth rate required for the simulated annealing to be successful ?
Second, can we allow for some potentials with unbounded set of local minima ?
We give answers to these questions, thanks to a localization procedure,
using as a {\it black box} the results of \cite{hks} in the compact case.

\subsection{Comments on the assumptions}
We could probably treat the case where $d=1$, but
some local times would appear here and there, this would change the definition of
rings, etc. Also, $(H_1(a))$ might be weakened in dimension $2$, as is rather clear
from Theorem~\ref{mr}, since we assume that $a>c(d-2)/2$. This is due
to the fact that the Brownian motion is recurrent in dimension $2$.
To simplify the presentation as much as possible, we decided not to address
these problems.

\vip

Assumption $(H_1)$ is rather clear and allows for very slowly growing potentials.
Any potential $U:\R^d\to \R$ satisfying, outside a compact,
$U(x)=|x|^r$ or $U(x)=(\log |x|)^r$, with $r>0$, satisfies $(H_1(a))$
for all $a>0$. And, of course, if $U(x)=a\log \log |x|$ outside a compact,
$(H_1(a))$ is satisfied. Proposition \ref{contrex} shows that in some loose sense,
the condition $(H_1(a))$ with $a>c(d-2)/2$ is optimal.
Observe also, and this is rather surprising, that $(H_1(a))$ does not guarantee at
all that the invariant measure $\exp(-\beta U(x))\dd x$ 
of the S.D.E. $\dd X^\beta_t=\dd B_t - \frac{\beta}2
\nabla U(X^\beta_t)\dd t$ with $\beta>0$ fixed, even large, can be normalized as a
probability measure.

\vip

We tried a lot to replace $(H_1(a))$ by its {\it integrated} version
$U(x) \geq a \log \log |x|$ outside a compact, and we did not succeed at all, 
even with the idea to get a much less sharp condition. This integrated condition
would be much more satisfactory, in particular since it would allow for 
potentials with unbounded sets of local minima.

\vip

Assumption $(H_2)$ is less clear, and might be improved, although we
tried to be as optimal as possible. The main idea is that a potential $U$
satisfies $(H_2(\alpha))$ if there are infinitely many {\it annuli} on which
$U$ increases at least of $\alpha$, sufficiently uniformly. Between these annuli,
the potential can behave as it wants, and in particular it may have
many local minima. Observe that $(H_2(\alpha))$ does not imply that
$\lim_{|x|\to \infty} U(x) = \infty$. However, one easily gets convinced
that $(H_2(\alpha))$, together with the condition $\alpha>c_*$,
implies that $\lim_{|x|\to \infty} U(x) = \infty$.

\vip

Assumption $(H_3)$ resembles much $(H_2)$. It is less general in that some
conditions on $\Delta U$ are imposed, but more general in that a ring allows for much
more general shapes than an annulus. Much less radial symmetry
is assumed.

\vip

Finally, $(H_2)$ and $(H_3)$ are not strictly more general 
than $(H_1)$. They are more intricate and thus harder to optimize. The following examples, 
that illustrate this fact, are 
not very interesting from the point of view of $(H_2)$ and $(H_3)$, since
the potentials below are radially symmetric and increasing, but they give an idea of the possibilities.

\vip 

$\bullet$ If $U(x)=(\log |x|)^{r}$, outside a compact,
with $r\in (0,1)$, then $U$ satisfies $(H_1(a))$ for all $a>0$. But it does not satisfy 
$(H_2(\alpha))$ for any $\alpha>0$, because we would have, for $i$ large enough, 
$\delta_i\geq b_i (\log b_i)^{1-r} / r$, whence $b_i\geq a_i + b_i (\log b_i)^{1-r} /r$. This is not possible,
since $b_i$ must increase to $\infty$ as $i\to \infty$. 
If $d\geq 3$, neither does it satisfy 
$(H_3(\alpha,\xi))$ for any $\alpha>0$ and $\xi>0$, since
$\lim_{|x|\to \infty} |\nabla U(x)|^{-2}\Delta U(x)= \infty$.

\vip

$\bullet$ If $U(x)=\kappa \log |x|$, outside a compact, with $\kappa>0$,
then $(H_1(a))$ is satisfied for all
$a>0$. Next, $(H_2(\alpha))$ 
is fulfilled if $\kappa>\alpha$: choose, for $i$ large enough, 
$a_i=q^{i}$, $b_i=a_{i+1}$, $\delta_i=q^{i+1}/\kappa$, with 
$q>1$ such that $q\geq 1+\alpha q/\kappa$. Finally, $(H_3(\alpha,\xi))$ is met if $\alpha>0$ and
$\xi\geq (d-2)/\kappa$: choose, for $i$ large enough, 
$\cZ_i=B(\exp(v_i/\kappa))\setminus B(\exp(u_i/\kappa))$ with $u_i=i \alpha$ and $v_i=u_{i+1}$ and $\kappa_i=1$.

\vip

$\bullet$ If $U(x)=(\log |x|)^r$, outside a compact,
with $r>1$, then $U$ satisfies $(H_1(a))$, $(H_2(\alpha))$ and $(H_3(\alpha,\xi))$ for all $a>0$, 
$\alpha>0$, $\xi>0$. For example, $(H_2(\alpha))$ is satisfied with, for $i$ large enough, 
$a_i=2^i$, $b_i=a_{i+1}$ and $\delta_i=b_i/(2\alpha)$.

\vip

As a conclusion, although we found some new results, the situation remains rather unclear.

\subsection{Main ideas of the proof}

Assume $(A)$ and fix $c>c_*$. First, it is rather natural to deduce 
the two following points
from the compact case \cite{hks}.

\vip

(a) Under the condition, to be verified, that $\sup_{t\geq 0} |X_t|<\infty$ a.s.,
then $U(X_t)\to 0$ in probability.

\vip

(b) If $G$ is an open connected set containing $x_0$ and the global minima of $U$
and such that $\partial G \subset \{x \in \R^d : U(x)\geq\alpha\}$ for some $\alpha>c$,
then $\PR(\forall \;t\geq0,\; X_t \in G)>0$.

\vip

The proof under $(H_1)$ then follows from two main arguments. First, a careful 
comparison of $(|X_t|)_{t\geq 0}$ with some Bessel process shows that $X$ cannot tend to
infinity, and thus visits infinitely often a compact set.
Second, each time it visits this compact set, it may remain stuck forever in it
with positive probability 
by point (b). 
With some work, we bound from below uniformly this probability.
Hence the process is eventually stuck in this compact set, so that we can apply (a).

\vip

The proof under $(H_2)$ or $(H_3)$ is rather easier. On the event where 
$\sup_{t\geq 0} |X_t| =\infty$, the process must cross all the annuli (or rings) 
in which $U$ is supposed to be {\it sufficiently increasing}.
But using some comparison arguments and point (b) above, there is a positive
probability that the process does not manage to cross a given annulus. Here again,
there is some work to get some uniform lowerbound. At the end, the process
can cross only a finite number of annuli (or rings), so that we can apply (a).

\subsection{Plan of the paper}
In the next section, we recall some results of Holley-Kusuoka-Stroock \cite{hks}
and deduce points (a) and (b) mentioned in the previous subsection.
We finally recall some classical facts about Bessel
processes. The other sections can be read independently. Sections
\ref{hh1}, \ref{hh2} and \ref{hh3} are respectively devoted
to the proofs of Theorem \ref{mr} under $(H_1)$, $(H_2)$ and $(H_3)$. We conclude the paper
with Section \ref{contrexxx} which contains the proofs of Propositions \ref{contrex}
and \ref{contrex2}.

\vip

As a final comment, let us mention that we use many similar comparison arguments.
We gave up producing a unified lemma, because it rather complicates the presentation,
since the time-life of the processes vary, etc, and because each time, 
the proof is very quick.

\section{Preliminaries}
We first recall some results of Holley-Kusuoka-Stroock on which our study entirely relies.
Recall that the constant $c_*$, concerning $U$, was introduced in Assumption $(A)$.
When considering a similar constant for another potential, we indicate it in superscript.

\begin{thm}[{\cite[Theorem 2.7 and Lemma 3.5]{hks}}]\label{HKS}
Consider a compact connected finite-dimensional Riemannian manifold $M$, as well as a 
$C^\infty$ function $V:M\to \R_+$
satisfying $\min_M V = 0$. We introduce $c_{*}^{V}=\sup\{E_V(x,y) : x,y\in M\}$, where
$$
E_V(x,y)=\inf \Big\{\max_{t\in [0,1]}V(\gamma(t))-V(x)-V(y) \;:\;
\gamma \in C([0,1],M), \gamma(0)=x,\gamma(1)=y \Big\}.
$$
Consider $c>c_{*}^{V}$ and $\beta_0\geq 0$, set $\beta_t=\beta_0+ [\log(1+t)]/c$ 
and consider the inhomogeneous 
$M$-valued 
diffusion $(Y_t)_{t\geq 0}$ with generator $\cL_{t}\phi(y)=\frac12\ddiv[\nabla \phi](y)
-\frac12\beta_t \nabla V(y)\cdot\nabla
\phi(y)$, for $y \in M$ and $\phi \in C^\infty (M)$, starting from some $y_0 \in M$. 
We denoted by
$\ddiv$ and $\nabla$ the Riemannian divergence and gradient operators. 

\vip

(i) It holds that $V(Y_t)\to 0$ in probability as $t\to \infty$.

\vip

(ii) Fix $\alpha \in (c_*^{V},c)$ and consider a connected open subset $G$ 
of $M$ satisfying the conditions that
$\{x\in M : V(x)=0\}\subset G$ and $\partial G \subset \{x\in M : V(x)\geq \alpha\}$.
If $y_0 \in G$, then $\PR(\forall \; t\geq 0 , \; Y_t \in G)>0$.
\end{thm}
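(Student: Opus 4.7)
The plan is to treat parts (i) and (ii) separately, as they call for different techniques. Throughout I write $\mu_\beta := Z_\beta^{-1} e^{-\beta V}$ for the Gibbs measure on $M$ and $P_t$ for the law of $Y_t$.

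For (i) I would use the classical entropy method. Let $f_t := \dd P_t / \dd \mu_{\beta_t}$ and $H_t := \int f_t \log f_t \, \dd \mu_{\beta_t}$. Using the time-dependent Fokker-Planck equation and the self-adjointness of $\cL_{\beta_t}$ in $L^2(\mu_{\beta_t})$ one obtains
\begin{equation*}
\dot H_t \;=\; -\tfrac12 I_t + \dot\beta_t \int f_t (V - \bar V_t)\, \dd \mu_{\beta_t},
\end{equation*}
where $I_t := \int |\nabla f_t|^2/f_t \, \dd \mu_{\beta_t}$ is the Fisher information and $\bar V_t := \int V \, \dd \mu_{\beta_t}$. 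The central input is the HKS logarithmic Sobolev inequality $I_t \geq 2\kappa(\beta_t) H_t$ with optimal constant $\kappa(\beta) \geq K e^{-\beta c_*^V}$; combined with the elementary bound $|\int f_t(V - \bar V_t)\,\dd\mu_{\beta_t}| \leq \mathrm{osc}_M V$ (using $V$ bounded on the compact manifold), this yields
\begin{equation*}
\dot H_t \;\leq\; -K(1+t)^{-c_*^V/c} H_t + \frac{C}{1+t}.
\end{equation*}
Since $c > c_*^V$, the dissipation coefficient is non-integrable at infinity, so an elementary Gronwall comparison forces $H_t \to 0$. Pinsker's inequality and the concentration of $\mu_{\beta_t}$ on $\{V = 0\}$ as $\beta_t \to \infty$ then give $V(Y_t) \to 0$ in probability.

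For (ii) I would aim for a uniform-in-$T$ lower bound on $\PR(\tau > T)$ with $\tau := \inf\{t : Y_t \notin G\}$. The natural device is to introduce the reflected diffusion $Y^G$ on $\overline G$ with the same generator: up to time $\tau$ the two processes can be coupled, so it suffices to control the boundary flux of $Y^G$ across $\partial G$. The restricted potential $V|_{\overline G}$ has saddle constant $c_*^{V|_{\overline G}} \leq c_*^V < c$, so part (i) applies to $Y^G$ and delivers convergence of $\mathrm{law}(Y^G_t)$ to $\mu_{\beta_t}|_{\overline G}$ in total variation. Since $\partial G \subset \{V \geq \alpha\}$, one has $\mu_{\beta_t}(\{x \in \overline G : V(x) \geq \alpha - \eta\}) \lesssim e^{-\beta_t(\alpha - \eta)}$ for small $\eta > 0$, and integrating the resulting boundary flux in time yields a convergent tail, provided the LSI-based decay from (i) absorbs the prefactors -- which is exactly what the margin $\alpha > c_*^V$ buys.

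The main obstacle will be (ii): converting an equilibrium mass estimate near $\partial G$ into a trajectory bound uniform in $T$ is delicate, because the convergence from (i) is asymptotic rather than pointwise-quantitative in $t$. I would expect to need a hypercontractivity estimate on $\|f^G_t - 1\|_{L^p(\mu_{\beta_t}^G)}$ for some $p>2$ to quantify the approach to equilibrium of $Y^G$, and then use it together with a trace inequality to bound the expected number of boundary crossings up to time $T$ independently of $T$.
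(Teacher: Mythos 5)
A point of context first: the paper does not prove Theorem \ref{HKS} at all. It is imported verbatim from Holley--Kusuoka--Stroock \cite{hks} (Theorem 2.7 and Lemma 3.5) and used as a black box, the only added remark being that extending from $\beta_0=0$ to $\beta_0\geq 0$ is harmless. So your proposal cannot be checked against an internal proof; it has to stand on its own, and be compared with \cite{hks}. For part (i), your entropy scheme is the classical route (closer in spirit to Holley--Stroock's log-Sobolev argument than to the spectral-gap argument of \cite{hks}, but equivalent in substance), and the computation you sketch does close: the identity for $\dot H_t$, the bound by $\mathrm{osc}_M V$, and the Gronwall comparison are fine, and $c>c_*^V$ is exactly what makes the dissipation beat the $C/(1+t)$ source. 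The caveat is that the ``central input'' $\kappa(\beta)\geq K e^{-\beta c_*^V}$ is precisely the hard content of the cited works, so invoking it means you are re-deriving a corollary of the literature rather than proving the theorem from scratch; that is acceptable only if stated as a quotation.

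Part (ii) contains a genuine, quantitative gap. You propose to bound the expected boundary flux of the reflected process by $\int_0^\infty e^{-\beta_t(\alpha-\eta)}\,\dd t$ and claim this tail converges. With $\beta_t=\beta_0+\log(1+t)/c$ this integral equals $e^{-\beta_0(\alpha-\eta)}\int_0^\infty(1+t)^{-(\alpha-\eta)/c}\,\dd t$, which \emph{diverges} whenever $\alpha\leq c$ --- and the statement takes $\alpha\in(c_*^V,c)$. The margin $\alpha>c_*^V$ does not help: it governs the speed of equilibration, not the exponent $(\alpha-\eta)/c<1$. An integrated-flux (or expected-number-of-upcrossings) argument of the kind you outline can only close when the boundary level strictly exceeds $c$; note that this is also the only regime in which the paper ever uses the lemma (point (b) of its introduction states $\alpha>c$, and in every application --- Lemma \ref{lemh12}, Lemmas \ref{troc} and \ref{troc3} --- the boundary value of the potential strictly exceeds the relevant cooling constant), so the interval $(c_*^V,c)$ in the transcribed statement should be treated with caution rather than taken as the natural target of a flux estimate. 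Two further problems: first, the claim $c_*^{V|_{\overline G}}\leq c_*^V$ is backwards --- restricting paths to $\overline G$ shrinks the admissible class in the infimum defining $E_V$, so saddle heights can only increase, and a pinched $G$ can have $c_*^{V|_{\overline G}}\geq c$, which would invalidate the appeal to part (i) for the reflected diffusion; second, converting an instantaneous equilibrium mass near $\partial G$ into a crossing bound uniform in $T$ is exactly the step you defer to an unspecified hypercontractivity-plus-trace argument, i.e.\ the core of Lemma 3.5 is left open. As it stands, part (ii) is a research plan, not a proof.
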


Actually, only the case where $\beta_0=0$ is treated in \cite{hks}, but this 
is not an issue.
Under $(A)$, $\nabla U$ is locally lipschitz continuous, whence the following observation.

\begin{rk}\label{nn}
Assume $(A)$. The equation
\eqref{eds} has a pathwise unique maximal solution $(X_t)_{t\in [0,\zeta)}$,  where
$\zeta$ takes values in $(0,\infty)\cup\{\infty\}$ and with 
$\PR(\{\zeta = \infty\}\cup\{\zeta<\infty, \lim_{t \uparrow \zeta} |X_t|=\infty\})=1$.
\end{rk}

We now show how the above results of \cite{hks} may extend to 
the non-compact case.

\begin{lem}\label{lemzero}
Assume $(A)$, fix $c>c_*$ and $\beta_0\geq 0$, and consider $(X_t)_{t\in [0,\zeta)}$ 
as in Remark \ref{nn}.

\vip

(i) Fix $\alpha \in (c_*,c)$ and consider a bounded connected open subset $G$ 
of $\R^d$ such that
$x_0\in G$, 
$\{x\in \R^d : U(x)=0\}\subset G$ and $\partial G \subset \{x\in \R^d : 
U(x)\geq \alpha\}$.
Then $\PR(\zeta=\infty$ and $\forall \; t\geq 0 , \; X_t \in G)>0$.

\vip

(ii) Assume that $\PR(\zeta=\infty$ and $\sup_{t\geq 0} |X_t| <\infty)=1$.
Then $U(X_t)\to 0$ in probability as $t\to \infty$.
\end{lem}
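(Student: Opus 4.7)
The strategy for both parts is to reduce to the compact setting of Theorem \ref{HKS} by means of a \emph{localisation}: for any bounded open set $D \subset \R^d$, I would construct a compact smooth Riemannian manifold $(M, g)$ containing an isometric copy of an open neighbourhood of $\bar D$, together with a smooth potential $V : M \to \R_+$ satisfying $V = U$ on (the image of) that neighbourhood, $\min_M V = 0$ with $\{V = 0\} \subset D$, and $c_{*}^{V} \leq c_*$. The HKS diffusion $(Y_t)_{t\geq 0}$ on $M$ with potential $V$, started at the image of $x_0$ and driven by the same Brownian motion as $X$, then coincides with $X$ as long as both remain in $D$, by pathwise uniqueness on the flat embedded piece.

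For part (i), apply the localisation with $D = G$. Then $\partial G \subset \{V \geq \alpha\}$, $\{V = 0\} \subset G$, and $c_*^V \leq c_* < \alpha$, so Theorem \ref{HKS}(ii) gives $\PR(\forall t \geq 0,\; Y_t \in G) > 0$. On this event $X_t = Y_t \in G$ for all $t$, so $X$ stays in the bounded set $G$ and $\zeta = \infty$ by Remark \ref{nn}.

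For part (ii), fix $\epsilon > 0$; for each $R > 0$ apply the localisation with $D = B(R)$ to obtain $M_R$, $V_R$ and a coupled diffusion $Y^R$ satisfying $V_R(Y^R_t) \to 0$ in probability by Theorem \ref{HKS}(i). Since $X_t = Y^R_t$ as long as $|X_t| < R$ and $V_R = U$ on $B(R)$,
\begin{equation*}
\PR(U(X_t) > \epsilon) \leq \PR\bigl(\sup_{s\geq 0} |X_s| \geq R\bigr) + \PR\bigl(V_R(Y^R_t) > \epsilon\bigr).
\end{equation*}
The first term vanishes as $R \to \infty$ by hypothesis, and the second vanishes as $t \to \infty$ for each fixed $R$; letting $R$ and then $t$ be large yields the claim.

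The bulk of the work is the localisation lemma itself: extending $U|_{\bar D}$ to a compact manifold while keeping the barrier constant $c_*^V \leq c_*$. The naive estimate $c_*^V \leq \max_M V$ is useless because $U$ may take arbitrarily large values on $\bar D$; however $c_*$ equals the supremum of $E(x,y)$ only over pairs where $x$ is a local minimum and $y$ a global minimum of $U$, which is much milder. I would take $M$ to be a large flat torus, use a smooth cutoff equal to $1$ on a neighbourhood of $\bar D$, and fill the complement with a monotone collar interpolating from $U|_{\partial D}$ to a plateau at a suitable intermediate level, plus a smooth cap carrying the unique maximum of $V$. Checking $c_*^V \leq c_*$ then reduces to showing that for every local minimum $x$ of $V$ and every global minimum $y$, there is a path in $M$ matching the barrier $E(x,y)$ obtained from $\R^d$-paths, i.e., certifying that the extension introduces neither deep new basins nor high unavoidable ridges in the landscape.
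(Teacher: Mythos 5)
Your overall skeleton is exactly the paper's: embed a large ball of $\R^d$ in a flat torus, extend $U$ to a smooth potential $V$ with controlled barrier constant, couple the torus diffusion with $X$ via the same Brownian motion and pathwise uniqueness (Gronwall) up to the exit time, then get (i) from Theorem \ref{HKS}(ii) and (ii) from Theorem \ref{HKS}(i) plus the tail bound $\PR(U(X_t)>\e)\leq \PR(\sup_s|X_s|\geq R)+\PR(V_R(Y^R_t)>\e)$. But the one genuinely nontrivial step --- producing the extension with $c_*^V\leq c_*$ (or at least $<\alpha$, $<c$) --- is precisely where you stop: you acknowledge it is ``the bulk of the work'' and reduce it to an unproved claim. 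Moreover, the specific construction you sketch (cut off at an arbitrary bounded $D$, a monotone collar interpolating from $U|_{\partial D}$ to a plateau at an ``intermediate level'', plus a cap carrying the unique maximum) does not obviously give $c_*^V\leq c_*$, and in general it will not: for $x,y\in \bar D$ the near-optimal paths realizing $E(x,y)$ in $\R^d$ may leave $D$, and once the exterior of $D$ is replaced by a landscape pinned to $U|_{\partial D}$ and a plateau/cap, those detours are blocked or forced over higher passes; depending on the plateau level you may also create plateau points that are new local minima with uncontrolled escape barriers. So the inequality $c_*^V\leq c_*$ is exactly what needs proof and your construction does not certify it.

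The paper's resolution is a simpler device that sidesteps all of this: do not extend from an arbitrary $D$, but from the sublevel set $D_A=\{U\leq m_A\}$ with $m_A=\max_{B(A)}U+1$ (compact since $U\to\infty$), and set $\tV_A=\min\{U,m_A\}$ on a torus containing $D_A$; the ``plateau'' is then at the \emph{top} level $m_A$ of the modified potential. With this choice the barrier bound is immediate: if $\tV_A(x)+\tV_A(y)+c_*\geq m_A$ any path works since $\tV_A\leq m_A$ everywhere, and otherwise a near-optimal $\R^d$-path between $x,y$ stays below level $m_A$, hence inside $D_A$ where $\tV_A=U$; so $c_*^{\tV_A}\leq c_*$, with no new minima and no new passes to worry about. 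Since $\tV_A$ is only continuous, one then takes a $C^\infty$ function $V_A$ with $V_A=U$ on $D_A$ and $\sup|V_A-\tV_A|\leq\e$, $\e=\min\{\alpha-c_*,1\}/4$, which perturbs the barrier constant by at most $3\e$ (so $c_*^{V_A}<\alpha<c$) and keeps $\{V_A=0\}=\{U=0\}$ because $V_A\geq m_A-\e>0$ off $D_A$. If you replace your collar-and-cap construction by this truncation-at-the-top argument (and note that taking $D_A\supset B(A)$ a sublevel set, rather than a ball or $G$ itself, is what makes it work), the rest of your plan goes through as in the paper; your coupling statement and the order of limits in (ii) should be phrased via $\limsup_{t\to\infty}\PR(U(X_t)\geq\e)\leq\PR(\Omega_A^c)$ for each fixed $A$, then $A\to\infty$.
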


\begin{proof}
For $R>0$, we introduce the flat torus $M_R= [-R,R)^d$, i.e. $\R^d$ quotiented by the 
equivalence relation $x\sim y$ if and only if $(x_i-y_i)/(2R) \in \Z$ for all 
$i=1,\dots,d$.

\vip

We also fix $c>c_*$ and $\alpha \in (c_*,c)$ for the whole proof.

\vip

{\it Step 1.} For all $A \geq 1$, there exist $R_A>A$
and a $C^\infty$ function $V_A:M_{R_A} \to \R_+$
such that $c_*^{V_A}<\alpha$, $\min_{M_{R_A}} V_A =0$, $\{x \in M_{R_A} : V_A(x)=0\}= \{x \in \R^d : U(x)=0\}$ 
and $U(x)=V_A(x)$ for all $x\in B(A)$.

\vip

Indeed, let $m_A=\max_{B(A)}U +1$, 
and $D_A=\{x\in \R^d : U(x)\leq m_A\}$, which is compact, since $U$ is continuous 
and satisfies $\lim_{|x|\to \infty} U(x) = \infty$. 
Hence there is $R_A>A$ such that $D_A \subset [-(R_A-1),(R_A-1)]^d$. 
We then introduce the continuous map $\tV_A:M_{R_A}\to \R_+$ defined by
$$
\tV_A(x)= \min\{U(x),m_A\}=U(x) \indiq_{\{x \in D_A\}} + m_A \indiq_{\{x \in M_{R_A}\setminus D_A\}}.
$$
Since $\tV_A$ is constant outside $D_A$ and since $\tV_A=U$ on $D_A$,
one easily checks that $c_*^{\tV_A}\leq c_*$.
\vip
We next consider $V_A: M_{R_A} \to \R_+$ of class $C^\infty$ such that 
$V_A(x)=\tV_A(x)=U(x)$ for $x \in D_A$
and such that $\sup_{x \in M_{R_A}} |V_A(x)-\tV_A(x)| \leq \e$, where $\e=\min\{\alpha-c_*,1\}/4$. 
We thus have 
$\min_{M_{R_A}} V_A =0$ and $\{x \in M_{R_A} : V_A(x)=0\}=\{x \in \R^d : U(x)=0\}$, 
because $\min U=0$, because $U=V_A$ on $D_A$ and because 
$U\geq m_A>0$ and $V_A \geq \tV_A-\e=m_A-\e\geq 1 - 1/4>0$ outside $D_A$.
Finally, we also 
have $c_*^{V_A} \leq c_*^{\tV_A}+3\e\leq c_*+3\e<\alpha$, since $\e \leq (\alpha-c_*)/4$. 
This ends the step.

\vip

{\it Step 2.} For each $A > \max\{1,|x_0|\}$, we consider
the inhomogeneous $M_{R_A}$-valued diffusion
\begin{align}\label{edsmod}
Y^A_t = x_0+ B_t - \frac12 \intot \beta_s \nabla V_A(Y^A_s)\dd s \quad \hbox{ modulo } 2R_A,
\end{align}
where, for $x=(x_1,\dots,x_d) \in \R^d$, 
$$
x \hbox{ modulo } 2R_A=
\Big(x_i - 2R_A\Big\lfloor \frac{x_i+R_A}{2R_A}\Big\rfloor \Big)_{i=1,\dots,d} 
\in [-R_A,R_A)^d.
$$
This is a $M_{R_A}$-valued time-inhomogeneous diffusion, starting from $x_0 \in M_{R_A}$, 
with time-dependent generator
$\cL_{t}\phi(y)=\frac12\ddiv[\nabla \phi](y)-\frac12\beta_t \nabla V_A(y)\cdot\nabla \phi(y)$. 
By Theorem \ref{HKS} and since $c>c_*^{V_A}$ by Step 1, 

\vip

(a) $V_A(Y^A_t) \to 0$ in probability as $t\to \infty$;

\vip

(b) if $\alpha \in (c_*^{V_A},c)$ and if $G$ is an open connected subset of $M_{R_A}$ such that 
$\{x\in M_{R_A} : V_A(x)=0\}\subset G$, 
$\partial G \subset \{x\in M_{R_A} : V_A(x)\geq \alpha\}$ and $x_0 \in G$,
then $\PR(\forall \; t\geq 0 , \; Y_t^A \in G)>0$.

\vip

{\it Step 3.} For each $A>\max\{1,|x_0|\}$ set 
$\Omega_A=\{\zeta=\infty,\;\sup_{t\geq 0} |X_t| < A\}$.
It holds that $\Omega_A=\{\sup_{t\geq 0} |Y_t^A| < A\}$ and
$\Omega_A \subset \{ \forall \; t\geq 0, \;\; X_t=Y^A_t\}$. 

\vip

Indeed, let $\tau_A=\inf\{t\geq 0 : |X_t|\lor |Y^A_t| > A\}$.
Since $R_A>A$, the modulo $2R_A$ is not active in \eqref{edsmod} during $[0,\tau_A]$.
Then a simple computation, using that $V_A=U$ on $B(A)$ and that
$\nabla U$ is lipschitz continuous on $B(A)$, with
Lipschitz constant $C_A$, shows that a.s., for all $t\geq 0$,
$$|X_{t\land \tau_A}-Y^A_{t\land \tau_A}| \leq C_A \intot \beta_s |X_{s\land \tau_A}-Y^A_{s\land \tau_A}| \dd s.$$ 
Since $(\beta_t)_{t\geq 0}$ is locally bounded,
$\sup_{t\geq 0}  |X_{t\land \tau_A}-Y^A_{t\land \tau_A}|=0$ a.s. by the Gronwall lemma.
Hence $X$ and $Y^A$ coincide until one of them (and thus both of them)
reaches $A$. The conclusion follows.

\vip

{\it Proof of (ii).} We fix $\e>0$. For $A> \max\{1,|x_0|\}$, by
Step 3 and since $V_A=U$ on $B(A)$,
$$
\PR(U(X_t) \geq \e) \leq \PR(\Omega_A^c) + \PR(U(X_t)\geq \e, \Omega_A) 
\leq \PR(\Omega_A^c) + \PR(V_A(Y^A_t)\geq \e) .
$$ 
By Step 2-(a), we conclude that 
$\limsup_{t\to \infty} \PR(U(X_t) \geq \e) \leq \PR(\Omega_A^c)$
for each $A\geq \max\{1,|x_0|\}$.
But by assumption, $\PR(\Omega_A^c)\to 0$ as $A\to \infty$, whence the conclusion. 

\vip

{\it Proof of (i).} We fix $G$ as in the statement. Consider 
$A> \max\{1,|x_0|\}$ such that $G \subset B(A)$. 
We thus have $G\subset M_{R_A}$, $\{V_A=0\}= \{U=0\}\subset G$,
and $\partial G \subset \{U\geq \alpha\}\cap B(A)=\{V_A\geq \alpha\}\cap B(A)$.
We then know by Step 2-(b)
that the event $\Omega_A'=\{\forall \; t\geq 0,\; Y^A_t \in G\}$ has a positive probability.
Using now that $\Omega_A' \subset \{\sup_{t\geq 0} |Y_t^A| < A\}=\Omega_A \subset 
\{ \forall \; t\geq 0, \;\; X_t=Y^A_t\}$ by Step 3, we deduce that we also have 
$\Omega_A' \subset\{\zeta=\infty$ and $\forall \; t\geq 0,\; X_t \in G\}$.
Thus $\PR(\zeta=\infty$ and $\forall \; t\geq 0,\; X_t \in G)>0$  as desired.
\end{proof}

We next recall some well-known facts concerning Bessel processes.

\begin{prop}\label{rap} Fix $\delta>0$, $r>0$ and 
let $(W_t)_{t\geq 0}$ be a $1$-dimensional Brownian motion.
Consider the pathwise unique solution $(R_t)_{t\geq 0}$, killed when 
it reaches $0$, to 
$$
R_t = r+ W_t + \frac{\delta-1}{2}\intot \frac{\dd s}{R_s}.
$$
Such a process is called a (killed) Bessel process with dimension $\delta$
starting from $r$.

\vip

(a) If $\delta \in (0,2)$, $(R_t)_{t\geq 0}$ a.s. reaches $0$.

\vip

(b) If $\delta\geq 2$, $(R_t)_{t\geq 0}$ does a.s. never reach $0$.

\vip

(c) If $\delta\geq 2$, we a.s. have
$\limsup_{t\to \infty} (t \log t)^{-1/2} R_t = 0$ a.s.

\vip

(d) If $\delta>2$, we a.s. have $\liminf_{t\to \infty} t^{-1/2}(\log t)^\nu R_t =\infty$,
where $\nu=4/(\delta-2)$.
\end{prop}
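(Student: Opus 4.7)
The plan is as follows. Parts $(a)$ and $(b)$ are standard boundary-classification results for one-dimensional diffusions. I compute the scale function $s$ of $R$: the equation $\cL s = \frac12 s'' + \frac{\delta-1}{2x}s' = 0$ rearranges to $(x^{\delta-1}s')' = 0$, so $s'(x) = x^{1-\delta}$, whence $s(x) = x^{2-\delta}/(2-\delta)$ for $\delta \neq 2$ and $s(x) = \log x$ for $\delta = 2$. The key point is that $s(0^+) = 0$ if $\delta < 2$ and $s(0^+) = -\infty$ if $\delta \geq 2$, while $s(\infty) = \infty$ except when $\delta > 2$ (where $s(\infty) = 0$). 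Applying optional stopping to the local martingale $s(R_\cdot)$ between $T_a$ and $T_b$ (with $0 < a < r < b$) yields $\PR_r(T_a < T_b) = (s(b)-s(r))/(s(b)-s(a))$, from which $(a)$ and $(b)$ follow by letting $a \to 0^+$ and then $b \to \infty$.

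For $(c)$, I rely on the Bessel law of iterated logarithm $\limsup_{t\to\infty} R_t/\sqrt{2t\log\log t} = 1$ a.s.\ for $\delta \geq 2$. When $\delta$ is an integer this is just the classical LIL for $|B_t|$ with $B$ a $\delta$-dimensional Brownian motion; for general $\delta \geq 2$ it follows from It\^o's formula on $R_t^2$ (which satisfies $\dd R_t^2 = 2R_t\,\dd W_t + \delta\,\dd t$) combined with a Dambis--Dubins--Schwarz time change. Since $\sqrt{t\log\log t} = o(\sqrt{t\log t})$, $(c)$ is immediate.

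Part $(d)$ is a Dvoretzky--Erd\H{o}s-type lower-envelope statement. The idea is that for $\delta > 2$ the function $f(x) = x^{2-\delta}$ satisfies $\cL f = 0$, so by It\^o $M_t := R_t^{2-\delta}$ is a nonnegative local martingale, hence a supermartingale. Transience from $(b)$ together with $s(\infty) = 0$ gives $R_t \to \infty$ a.s., while Brownian scaling of Bessel processes forces $\E[M_t]$ to be of order $t^{-(\delta-2)/2}$. Markov's inequality then yields $\PR(M_{t_n} \geq k^{-1} t_n^{-(\delta-2)/2}(\log t_n)^4) \lesssim k/(\log t_n)^4$; along $t_n = e^n$ this is summable. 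Borel--Cantelli produces $R_{t_n} \geq k^{1/(\delta-2)} t_n^{1/2}(\log t_n)^{-\nu}$ eventually, for every $k$, with $\nu = 4/(\delta-2)$; letting $k \to \infty$ gives the liminf along $t_n$, and Doob's maximal inequality on $M$ extends the bound to all $t$.

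The main obstacle is $(d)$, where the rate of decay of $M_t$ requires some care, but the generous slack between $\nu = 4/(\delta-2)$ and the sharp Dvoretzky--Erd\H{o}s exponent $1/(\delta-2)$ means only crude tail bounds are needed, so the argument is robust.
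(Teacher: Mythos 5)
The paper offers no proof of Proposition \ref{rap} at all: it treats the statement as a black box, referring to Revuz--Yor \cite{ry} for (a), (b) and for the sharper law of the iterated logarithm behind (c), and to Motoo \cite{m} (see also Pardo--Rivero \cite{pr}) for (d), whose quoted criterion $\int_0^\infty (1+t)^{-1}[f(t)]^{(2-\delta)/2}\dd t<\infty$ applied to $f(t)=(\log t)^\nu$, $\nu=4/(\delta-2)$, gives exactly the statement. Your proposal instead proves everything directly, and the route is sound: the scale-function computation with optional stopping gives (a) and (b) in the standard way, and for (d) the nonnegative supermartingale $M_t=R_t^{2-\delta}$, the bound $\E[M_t]\lesssim t^{-(\delta-2)/2}$, Markov plus Borel--Cantelli along $t_n=e^n$, and the supermartingale maximal inequality over the blocks $[t_n,t_{n+1}]$ (which only costs a factor $e^{(\delta-2)/2}$ in the constants) do yield $\liminf_{t\to\infty} t^{-1/2}(\log t)^\nu R_t=\infty$, precisely because $\nu=4/(\delta-2)$ sits comfortably above the critical exponent. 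Two steps deserve one more line each. In (d), ``Brownian scaling forces $\E[M_t]\asymp t^{-(\delta-2)/2}$'' is slightly too quick: scaling gives $\E[M_t]=t^{-(\delta-2)/2}\,\E_{r/\sqrt t}\big[R_1^{2-\delta}\big]$ with a starting point tending to $0$, so you should add that $x\mapsto\E_x[R_1^{2-\delta}]$ is nonincreasing (pathwise comparison together with the fact that $u\mapsto u^{2-\delta}$ is decreasing) and finite at $x=0$ (the density of $R_1$ started at $0$ vanishes like $\rho^{\delta-1}$), which gives the upper bound --- the only direction you use. In (c), the Dambis--Dubins--Schwarz derivation of the LIL for non-integer $\delta\geq 2$ is only sketched; since the statement (c) is far weaker than the LIL, the cleanest fix is a pathwise comparison of $R$ with a Bessel process of integer dimension $\lceil\delta\rceil$ driven by the same Brownian motion (larger drift), reducing (c) to the classical LIL for $|B_t|$, or else you must complete the time-change argument with a self-bounding estimate on $\sup_{s\le t}R_s^2$; as written this step is plausible but not finished. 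With these two touches your argument is complete and self-contained, which is more than the paper provides; what the paper's citations buy is the sharp integral test (Motoo, Dvoretzky--Erd\H{o}s type), of which your exponent $4/(\delta-2)$ is a non-optimal but entirely sufficient special case.
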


We refer to Revuz-Yor \cite[Chapter XI]{ry} for (a) and (b).
For (c), we actually have the more precise estimate $\limsup_{t\to \infty} 
(2t \log \log t)^{-1/2} R_t = 1$ a.s., see \cite[Chapter XI, Exercise 1.20]{ry}. 
Finally, (d) is proved in Motoo \cite{m}, when $\delta\geq 3$ is an integer, 
as a corollary of a
general result about diffusion processes 
that also applies to the case where $\delta>2$ is not an integer. More precisely, 
we have
$\liminf_{t\to \infty} t^{-1/2}f(t) R_t =\infty$ a.s. if $f:\R_+\to [1,\infty)$ is increasing and satisfies
$\int_0^\infty (1+t)^{-1}[f(t)]^{(2-\delta)/2} \dd t<\infty$.
See also Pardo-Rivero \cite[Subsection 2.3]{pr}, where this result is stated 
in terms of {\it squared} Bessel processes.

\section{Proof under $(H_1)$}\label{hh1}

First, we verify that the solution to \eqref{eds} is global and that it always
comes back in $B(A_0)$, where $A_0\geq 2$ was introduced in $(H_1(a))$.
This lemma really uses that $a$ is large enough.

\begin{lem}\label{lemh11}
Assume $(A)$, fix $c>0$ and $\beta_0\geq 0$, and suppose $(H_1(a))$ for some $a>0$. 
Consider the unique
maximal solution $(X_t)_{t\in [0,\zeta)}$ to \eqref{eds}, see Remark \ref{nn}.

\vip

(i) The solution is global, i.e. $\PR(\zeta=\infty)=1$.

\vip

(ii) If $a>c(d-2)/2$, for all $r\geq 0$, all $x\in \R^d \setminus B(A_0)$, 
$\PR(\inf\{t\geq r : |X_{t}|=A_0\}<\infty | X_{r}=x)=1$.
\end{lem}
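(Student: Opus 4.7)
The plan is to work throughout with the radial process $\rho_t = |X_t|$, which by It\^o's formula (valid up to the lifetime, since $d \geq 2$ prevents hitting $0$) satisfies
$$d\rho_t = dW_t + \frac{d-1}{2\rho_t}\,dt - \frac{\beta_t X_t \cdot \nabla U(X_t)}{2\rho_t}\,dt,$$
where $W$ is a $1$-dimensional Brownian motion by L\'evy's characterization. Under $(H_1(a))$, $X_t \cdot \nabla U(X_t) \geq 0$ whenever $|X_t| \geq A_0$, so on $\{\rho_t \geq A_0\}$ the drift of $\rho_t$ is bounded above by $(d-1)/(2\rho_t)$. For part (i), I would use the one-dimensional SDE comparison theorem to dominate $\rho_t$ by a Bessel process $R$ of dimension $d$ driven by the same $W$; since $R$ does not explode (Proposition~\ref{rap}(b)) and the drift of $\rho$ is smooth and locally bounded inside $B(A_0)$, this yields $\PR(\zeta=\infty)=1$.

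For part (ii), I would argue by contradiction. Set $\tau=\inf\{t\geq r: |X_t|=A_0\}$ and suppose $\PR(\tau=\infty\mid X_r=x)>0$. On $\{\tau=\infty\}$ one has $\rho_t>A_0$ for all $t\geq r$, so the Bessel $d$ domination from (i) (applied from time $r$) gives $\rho_t\leq R_t$ for all $t\geq r$, with $R$ a Bessel $d$ process starting at $|x|$ at time $r$, driven by the shifted $W$. Proposition~\ref{rap}(c) then yields $R_t=o(\sqrt{t\log t})$ almost surely, so on $\{\tau=\infty\}$, $\log \rho_t\leq(1/2+\epsilon)\log t$ for any fixed $\epsilon>0$ and all $t$ exceeding some a.s.\ finite random time $T^*$.

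I would then choose $\epsilon>0$ small enough that $a/(c(1/2+\epsilon))>d-2$, which is possible precisely because $a>c(d-2)/2$. Since $\beta_t\sim c^{-1}\log t$, there exist a deterministic $T_0$ and some $\delta\in(0,2)$ such that, for all $t\geq T_0$ and $\rho\in[A_0,\sqrt{t\log t}]$,
$$\frac{d-1}{2\rho}-\frac{\beta_t a}{2\rho \log \rho}\leq\frac{\delta-1}{2\rho}.$$
Setting $S=T_0\vee T^*$, which is a.s.\ finite on $\{\tau=\infty\}$, the drift of $\rho_t$ is then bounded above by $(\delta-1)/(2\rho_t)$ for all $t\geq S$ on $\{\tau=\infty\}$. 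Applying the strong Markov property at $S$, I would couple $(\rho_{S+t})_{t\geq 0}$ with a Bessel $\delta$ process $\tilde R$ starting at $\rho_S$ and driven by the shifted $W$. The SDE comparison theorem then gives $\rho_{S+t}\leq\tilde R_t$ as long as $\rho_{S+s}\geq A_0$ for all $s\leq t$. Since $\delta\in(0,2)$, Proposition~\ref{rap}(a) ensures that $\tilde R$ reaches $0$, and in particular crosses $A_0$, at some a.s.\ finite time; at that time $\rho\leq A_0$, contradicting $\tau=\infty$.

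I expect the main obstacle to be this final Bessel $\delta$ comparison: the actual drift of $\rho_t$ depends on $X_t\cdot\nabla U(X_t)/|X_t|$, not only on $\rho_t$, and the useful upper bound relies jointly on having $\rho_t\geq A_0$ and the envelope $\rho_t\leq \sqrt{t\log t}$, so the two successive comparisons (first with Bessel $d$ to obtain the envelope, then with Bessel $\delta$ to obtain the return) must be threaded together using the same radial Brownian motion. A secondary technicality is the randomness of the starting time $S$, handled by the strong Markov property. The sharpness condition $a>c(d-2)/2$ is used exactly in the step that produces a dimension $\delta<2$ for the second Bessel process, which is what makes it hit $A_0$ almost surely and so gives the required contradiction.
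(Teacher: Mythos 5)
Your overall strategy matches the paper's: obtain a Bessel-$d$ envelope $\rho_t \lesssim \sqrt{t\log t}$, convert that into a drift bound showing $\rho$ is dominated by a Bessel process of dimension $\delta<2$ (this is exactly where $a>c(d-2)/2$ enters), and conclude from Proposition~\ref{rap}(a) that $\rho$ must fall below $A_0$. For part (i) the paper uses the simpler direct bound $\E[|X_{T\wedge\tau_n}|^2]\leq C_T$ via It\^o on $|X_t|^2$ and the inequality $x\cdot\nabla U(x)\geq -C$; your Bessel-$d$ comparison also works but needs extra care inside $B(A_0)$ where the drift can push $\rho$ upward, so the $|X_t|^2$ route is cleaner.

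There is, however, a genuine gap in your part (ii). You set $S=T_0\vee T^*$ where $T^*$ is the (a.s.\ finite) time after which the $\sqrt{t\log t}$ envelope holds permanently, and you then invoke the strong Markov property at $S$ to couple $(\rho_{S+t})_{t\geq 0}$ with a Bessel-$\delta$ process driven by $(W_{S+t}-W_S)_{t\geq 0}$. But $T^*$ is defined in terms of the entire future of $R$ (it is essentially $\sup\{t:R_t\geq\sqrt{t\log t}\}$), so $S$ is not a stopping time, and $(W_{S+t}-W_S)_{t\geq 0}$ is not a Brownian motion. Consequently $\tilde R$ is not a genuine Bessel process and Proposition~\ref{rap}(a) does not apply to it. Moreover, $\rho_S$ is a random level with unbounded support, so one cannot even say ``a'' Bessel-$\delta$ process. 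The paper's proof avoids exactly this pitfall: it fixes a deterministic time $K$ and a deterministic level $L>A_0$, works on the event $\Omega_{K,L}=\{\sigma=\infty,\ \tau\leq K,\ S_K\leq L\}$ (where $\tau$ is the paper's analogue of your $T^*$), and compares against the Bessel-$\delta$ process started at the deterministic pair $(K,L)$. Since that process is a genuine Bessel-$\delta$, $\PR(\Omega_{K,L})=0$ for each $(K,L)$, and one then lets $L\to\infty$, $K\to\infty$. A further small difference: the paper first dominates $|\tilde X_t|$ by a one-dimensional diffusion $S_t$ with drift $\frac{d-1}{2S}-\frac{a\beta_t}{2S\log S}$, which removes the dependence on the full vector $X_t$ that you flagged as an obstacle, and then performs both Bessel comparisons on $S_t$.
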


\begin{proof}
By $(A)$ and $(H_1(a))$, there is $C>0$ such that $x\cdot\nabla U(x) \geq -C$ for all $x\in \R^d$.
For $n\in \N$, we define 
$\tau_n=\inf\{t>0 : |X_t|\geq n\}$. By It\^o's formula, we have, for any $T>0$,
$$\E[|X_{T\land \tau_n}|^2] \leq |x_0|^2+ dT+C\int_0^T \beta_s \dd s=:C_T.$$ 
Consequently, $\PR(\tau_n\leq T)\leq 
\PR(|X_{T\land \tau_n}|\geq n)\leq C_T/n^2$, so that $\zeta= \lim_n \tau_n=\infty$ a.s., which proves (i).
Concerning (ii), we fix $|x|>A_0\geq 2$ and $r\geq 0$ and we split the proof into several parts.

\vip

{\it Step 1.} Conditionally on $X_r=x$, the process 
$(\tX_t)_{t\geq 0}:=(X_{t+r})_{t\geq 0}$ solves \eqref{eds} with
$x_0$, $(\beta_{t})_{t\geq 0}$ 
and 
$(B_t)_{t\geq 0}$ replaced by $x$, $(\beta_{r+t})_{t\geq 0}$ and 
$(\tB_t)_{t\geq 0}:=(B_{t+r}-B_r)_{t\geq 0}$.
Observe that $(\tX_t)_{t\geq 0}$ 
does never hit $0$ by the Girsanov theorem
and since $d\geq 2$. We thus may use It\^o's formula to compute
$$
|\tX_{t}|= |x| + W_t + \int_0^{t} \Big(\frac{d-1}{2 |\tX_s|} - 
\frac{\beta_{r+s}\tX_s\cdot \nabla U(\tX_s)}{2|\tX_s|}
\Big) \dd s,
$$
where $W_t:=\int_0^t \frac{\tX_s\cdot \dd \tB_s}{|\tX_s|}$ is a 
$1$-dimensional Brownian motion.
We define $\rho=\inf\{t\geq 0 : |\tX_t|=A_0\}$ and recall that our goal is to prove that
$\rho<\infty$ a.s. We next introduce $(S_t)_{t\in [0,\sigma]}$ solving
$$
S_t = |x| + W_t + \int_0^{t} \Big(\frac{d-1}{2 S_s} - \frac{a \beta_{s}}{2S_s \log S_s}
\Big) \dd s\quad \hbox{killed at} \quad \sigma=\inf\{t\geq 0 : S_t=A_0\}.
$$
We claim that $\{\rho = \infty\} \subset \{\sigma=\infty\}$. Indeed, 
using $(H_1(a))$ and that $\beta_{r+s}\geq \beta_s$, one checks that
$[\beta_{r+s}\tX_s\cdot \nabla U(\tX_s)]/[2|\tX_s|]\geq 
[a \beta_{s}]/[2|\tX_s|\log|\tX_s|]$ for all $s\in [0,\rho]$. 
Hence, setting $b(s,r)=(d-1)/(2r) - [a \beta_s]/[2r\log r]$ for $s\geq 0$ and $r\geq A_0$,
we have
$$
\frac{\dd}{\dd t} (|\tX_t|-S_t) \leq (b(t,|\tX_t|)-b(t,S_t))
$$
for all $t\in [0,\rho\land\sigma)$. Hence, setting $z_+=\max\{z,0\}$,
\[\frac{\dd}{\dd t} (|\tX_t|-S_t)_+^2 \leq 2(|\tX_t|-S_t)_+ 
(b(t,|\tX_t|)-b(t,S_t)) 
\leq  2 C_t (|\tX_t|-S_t)_+ \big| |\tX_t|-S_t \big|  
= 2 C_t (|\tX_t|-S_t)_+^2,\]
$C_t$ being the global Lipschitz constant of $r \mapsto b(t,r)$ on $[A_0,\infty)$. Since 
$S_0=|\tX_0|$ and since 
$t\mapsto C_t$ is locally bounded on $[0,\infty)$, we conclude that 
$(|\tX_t|-S_t)_+^2=0$ for all $t\in
[0,\rho\land \sigma)$ a.s.
Hence, on the event $\{\rho = \infty\}\cap \{\sigma<\infty\}$, 
we have $S_t \geq |\tX_t|>A_0$ for all $t\in [0,\sigma]$, whence $\sigma=\infty$.
Thus $\{\rho = \infty\}\subset \{\sigma=\infty\}$, so that
our goal is from now on to verify that $\sigma<\infty$ a.s.

\vip

{\it Step 2.} We next introduce the Bessel process $(R_t)_{t \geq 0}$
$$
R_t= |x|+ W_t + \frac{d-1}{2}\int_0^t \frac{\dd s}{R_s}.
$$
Since $d\geq 2$, we know from Proposition \ref{rap}-(b) that $R_t$ does never reach $0$.
It holds that a.s., $S_t \leq R_t$ for all 
$t\in [0,\sigma)$\footnote{
If $d=2$, one may conclude here, since $R$ is then recurrent.}:
it is sufficient to use that $S_0=R_0$ and 
that for $t\in [0,\sigma)$, since $a\beta_t/[2S_t\log S_t]\geq 0$,
$$
\frac \dd{\dd t} (S_t-R_t)_+^2 \leq (d-1)(S_t-R_t)_+
\Big(\frac 1{S_t}-\frac 1{R_t}\Big)\leq 0.
$$
By Proposition \ref{rap}-(c), 
$\limsup_{t\to \infty} (t \log t)^{-1/2}R_t =0$ a.s., so that
\[\liminf_{t\to \infty} \log t/\log R_t  \geq 2,\]
 whence 
$$\{\sigma = \infty\}\subset 
\{\liminf_{t\to \infty} \log t/\log S_t  \geq 2\}\subset
\{\liminf_{t\to \infty} \beta_t/\log S_t  \geq 2/c\}.
$$
We fix $\eta\in(0,1)$ such that $\delta:=d-2a(1-\eta)/c \in (0,2)$, 
which is possible because
$a>c(d-2)/2$. We then know that 
$\tau= \inf\{t>0 : \forall\; s\geq t, \; \beta_s/\log S_s  \geq 2(1-\eta)/c \}$ 
is a.s. finite on $\{\sigma=\infty\}$.

\vip

{\it Step 3.} We now fix $K\geq 1$ and $L> A_0$ and we introduce 
$\Omega_{K,L}=\{\sigma= \infty,\tau \leq K, S_K \leq L\}$, as well as the
Bessel process $(S^{K,L}_t)_{t\geq L}$
with dimension $\delta \in (0,2)$, issued from $L$ at time $K$:
for all $t\geq K$,
$$
S^{K,L}_t= L + (W_t-W_K) + \frac{\delta-1}{2} \int_K^t \frac{\dd s}{S^{K,L}_s}\quad
\hbox{killed at $\sigma_{K,L}=\inf\{t\geq K : S^{K,L}_t=A_0\}$.} 
$$
We claim that $\Omega_{K,L} \subset \{\sigma_{K,L}=\infty\}$.
By definition of $\tau$ and since $\delta=d-2a(1-\eta)/c$, we see 
that on $\Omega_{K,L}$, for all $t\in [K,\sigma_{K,L})$, we have
$$
\frac \dd{\dd t}(S_t-S^{K,L}_t)=\frac{d-1}{2 S_t} - 
\frac{a \beta_t}{2S_t\log S_t} - \frac{\delta-1}{2S^{K,L}_t}
\leq \frac{\delta-1}2\Big(\frac1{S_t}-\frac1{S^{K,L}_t} \Big),
$$
whence, for all $t\geq K$,
$$
\frac \dd{\dd t}(S_t-S^{K,L}_t)_+^2\leq (\delta-1)(S_t-S^{K,L}_t)_+ 
\Big(\frac1{S_t}-\frac1{S^{K,L}_t} \Big)
\leq 0.
$$
Since furthermore $S_K\leq L$ on $\Omega_{K,L}$, we have $(S_K-S^{K,L}_K)_+=0$, so that,
still on $\Omega_{K,L}$,
$S^{K,L}_t\geq S_t>A_0$ for all $t\in [K,\sigma_{K,L})$, whence $\sigma_{K,L}=\infty$
(else, we would have $A_0=S^{K,L}_{\sigma_{K,L}}\geq S_{\sigma_{K,L}}>A_0$).

\vip

{\it Step 4.} But we know from Proposition \ref{rap}-(a), since $\delta \in (0,2)$, that
$\sigma_{K,L}<\infty$ a.s. We conclude that for all $K\geq 1$, all $L>A_0$, 
$\PR(\Omega_{K,L})=0$, i.e. $\PR(\sigma= \infty,\tau \leq K, S_K \leq L)=0$.
Letting $L\to \infty$, we find that  $\PR(\sigma= \infty,\tau \leq K)=0$,
since $S_K<\infty$ a.s. on $\{\sigma=\infty\}$. Letting $K\to \infty$,
we deduce that $\PR(\sigma= \infty)=0$, since $\tau<\infty$ a.s. on $\{\sigma=\infty\}$ 
by Step 2.
The proof is complete.
\end{proof}

We now bound from below the probability to remain stuck forever in a certain ball
when starting from the circle with radius $A_0$.

\begin{lem}\label{lemh12}
Assume $(A)$, fix $c>0$ and $\beta_0\geq 0$, and suppose $(H_1(a))$ for some $a>0$. 
Consider the unique
(global by Lemma \ref{lemh11}) solution $(X_t)_{t\geq 0}$ to \eqref{eds}.
There is $B>A_0$ such that 
$$
p:=\inf_{r \geq 0, |x|=A_0}  \PR\Big(\sup_{t\geq r} |X_t| < B \Big|X_{r}=x\Big) >0.
$$
\end{lem}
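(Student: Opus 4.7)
The plan is to pick a confining ball $G=B(B)$ with $U\geq\alpha>c_*$ on $\partial G$, apply a shifted version of Lemma \ref{lemzero}(i) to get pointwise positivity of $p(r,x):=\PR(\sup_{t\geq r}|X_t|<B\mid X_r=x)$, and then upgrade this to a uniform lower bound via compactness and a large-$r$ argument.

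For the setup, fix $\alpha\in(c_*,c)$. Using $\lim_{|y|\to\infty}U(y)=\infty$ from assumption $(A)$, choose $B>A_0$ such that $U\geq\alpha$ on $\{|y|\geq B\}$ and $\{U=0\}\subset B(B)$. Then $G:=B(B)$ is bounded, connected, open, contains every $x\in\partial B(A_0)$ and all global minima of $U$, and has $\partial G\subset\{U\geq\alpha\}$. For fixed $r$ and $|x|=A_0$, the shifted process $\tX_t:=X_{r+t}$, conditional on $X_r=x$, solves \eqref{eds} starting at $x$ with Brownian motion $\tB_t:=B_{r+t}-B_r$ and schedule $\bar\beta_t:=\beta_{r+t}=\beta_0+\log(1+r+t)/c$, whose asymptotic rate is still $1/c>1/c_*^V$ for $V$ the modified potential on the torus built in the proof of Lemma \ref{lemzero}. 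Consequently the torus-coupling argument and Theorem \ref{HKS}(ii) adapt verbatim to give $p(r,x)>0$ for every such $(r,x)$.

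The hard part is the uniform bound $\inf_{r,x}p(r,x)>0$. I would split into two regimes. On any compact $[0,R_0]\times\partial B(A_0)$, the map $(r,x)\mapsto p(r,x)$ is lower semi-continuous (by Fatou applied to the event $\{\forall t\geq 0,|\tX_t|<B\}$ together with the continuous dependence of the SDE solutions on the initial pair $(r,x)$), so compactness upgrades pointwise positivity to $\inf p>0$ on this set. For $r\geq R_0$ with $R_0$ large, $\bar\beta_t$ is uniformly large, and the drift $-\tfrac12\bar\beta_t\nabla U$ becomes strongly attractive toward $\{U=0\}$: an It\^o computation on $U(\tX_t)$ yields the drift $\tfrac12(\Delta U-\bar\beta_t|\nabla U|^2)$, which is very negative wherever $|\nabla U|$ is bounded away from $0$, so that $U(\tX_t)$ is essentially a supermartingale and exit from $G$ becomes uniformly improbable.

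The main obstacle is precisely this $r\to\infty$ uniformity, because $U$ may have critical points inside the annulus $\{A_0\leq|y|\leq B\}$ at which $\nabla U=0$, and the naive strong-drift argument degenerates. To handle these critical points rigorously one would need either the quantitative Dirichlet-form estimates underlying Theorem \ref{HKS}(ii), or a coupling/Girsanov argument establishing monotonicity of $p(r,x)$ in $r$ (stronger drift should increase confinement in a ball whose boundary is at high potential).
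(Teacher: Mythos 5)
Your proposal does not close the key difficulty, and the two steps you rely on to get uniformity both have problems. First, the compactness step: the map $(r,x)\mapsto p(r,x)=\PR(\sup_{t\geq r}|X_t|<B\mid X_r=x)$ is the decreasing limit, as $T\to\infty$, of the finite-horizon probabilities $\PR(\sup_{r\leq t\leq r+T}|X_t|<B\mid X_r=x)$; such a limit is (at best) upper semi-continuous, not lower semi-continuous, and the Fatou argument you invoke fails because local-uniform convergence of the coupled solutions gives no control on the tail $t\to\infty$ of the event $\{\forall t\geq0,\ |\tX_t|<B\}$. So pointwise positivity plus compactness of $[0,R_0]\times\partial B(A_0)$ does not yield a positive infimum even on the bounded-$r$ regime. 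Second, for $r\geq R_0$ you yourself concede that the supermartingale/strong-drift heuristic for $U(\tX_t)$ degenerates at critical points of $U$ in the annulus $\{A_0\leq|y|\leq B\}$ (which $(H_1(a))$ does not exclude), and neither of the two escape routes you mention (quantitative Dirichlet-form estimates, or a monotonicity-in-$r$ coupling) is carried out. Since the whole point of the lemma is precisely this uniformity in $r$ and $x$, the proof is incomplete. A further mismatch: your choice of $\alpha\in(c_*,c)$ and of $B$ with $U\geq\alpha$ on $\partial B(B)$ requires $c>c_*$, which the lemma does not assume (it only assumes $c>0$).

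The paper resolves the uniformity issue by never applying the confinement result to $U$ itself. Instead, using $(H_1(a))$ and the monotonicity $\beta_{r+s}\geq\beta_s$, it dominates $|X_{r+t}|-A_0$ pathwise by a one-dimensional process $R_t$ driven by the \emph{unshifted} schedule $\beta_s$, with drift $\frac{d-1}{2R_s}-\beta_s b(R_s)$ where $b(\rho)=a\rho/[4(A_0^2+\rho^2)\log(A_0^2+\rho^2)]$; the law of $R$ depends on neither $r$ nor $x$, so uniformity is automatic. It then identifies $R$ in law with $|Y_t|$ for an annealing diffusion $Y$ with the radial potential $V(y)=\frac a4[\log\log(A_0^2+|y|^2)-\log\log(A_0^2)]$, which satisfies $(A)$ with $c_*^V=0$, so Lemma \ref{lemzero}-(i) applies for every $c>0$ and gives $\PR(\sup_{t\geq0}R_t\leq K)>0$ for a suitable $K$, hence the uniform bound with $B=A_0+K$. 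In short, the "monotonicity in $r$" you wished for is exactly what the comparison with an $r$-independent auxiliary process delivers, and the auxiliary potential with $c_*^V=0$ removes any need for $c>c_*$.
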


\begin{proof} In view of Lemma \ref{lemzero}-(i), the only difficulty 
is get the uniformity in $r\geq 0$
and $|x|=A_0\geq 2$.
\vip

{\it Step 1.} We fix $r\geq 0$ and $x\in \R^d$ such that $|x|=A_0$ and 
we set $(\tX_t)_{t\geq 0}=(X_{r+t})_{t\geq 0}$.
Exactly as in the first step of the previous proof, we can write, conditionally
on $X_r=x$,
$$
|\tX_t|=A_0+ W_t+\int_0^t \Big(\frac{d-1}{2|\tX_s|} - 
\frac{\beta_{r+s} \tX_s\cdot \nabla U(\tX_s)}{2|\tX_s|}\Big) \dd s.
$$
We claim that a.s., $|\tX_t|\leq A_0+R_t$ for all $t\geq 0$, 
where $R_t$ is $(0,\infty)$-valued and solves
$$
R_t= 1+ W_t + \int_0^t \Big(\frac{d-1}{2R_s} - \beta_{s} b(R_s)\Big) \dd s,
$$
with $b(r)= a r /[4(A_0^2+r^2) \log(A_0^2+r^2)]$. The fact that
$R$ does never reach $0$ follows from Proposition \ref{rap}-(b), since $d\geq 2$,
and from the Girsanov theorem, since $b$ is bounded.

\vip

To check this claim, we first
observe that, thanks to $(H_1(a))$,
$$
|x|\geq A_0 \;  \Longrightarrow\;
\frac{\beta_{r+t} x \cdot \nabla U(x)}{2|x|} \geq 
\frac{ a \beta_{s}}{2|x| \log |x|} \geq \beta_s b(|x|-A_0),
$$
the last inequality following from the fact that $b(r)\leq a/[2(A_0+r)\log(A_0+r)]$
for all $r\geq 0$, 
because
$\log(A_0^2+r^2)\geq \log (A_0+1+r^2) \geq \log(A_0+r)$ and 
$4(A_0^2+r^2) / r \geq 2 (A_0+r)^2/(A_0+r)= 2 (A_0+r)$.
Consequently, using that $(|\tX_t|-A_0-R_t)_+>0$ implies that 
$|\tX_t|\geq A_0+R_t\geq A_0$, we see that
\begin{align*}
\frac \dd{\dd t} (|\tX_t|-A_0-R_t)_+^2 =&\ 2(|\tX_t|-A_0-R_t)_+ 
\Big[ \frac{d-1}{2|\tX_t|}   - \frac{\beta_{r+t} \tX_t\cdot \nabla U(\tX_t)}{2|\tX_t|}
    -\frac{d-1}{2R_t} +\beta_t b(R_t)\Big] \\
\leq&\ 2(|\tX_t|-A_0-R_t)_+ 
\Big[ \frac{d-1}{2}\Big(\frac{1}{|\tX_t|}
-\frac{1}{R_t} \Big) - \beta_t [b(|\tX_t|-A_0) - b(R_t)]\Big]\\
\leq &\  - 2 \beta_t(|\tX_t|-A_0-R_t)_+[b(|\tX_t|-A_0) - b(R_t)] \\
\leq &\  2 C \beta_t (|\tX_t|-A_0-R_t)_+ \big||\tX_t|-A_0-R_t\big|\\
\leq &\  2C \beta_t (|\tX_t|-A_0-R_t)_+^2,
\end{align*}
where $C$ is the (global) Lipschitz constant of $b$. 
The claim follows, since $|\tX_0|-A_0-R_0=-1\leq0$.

\vip

{\it Step 2.} Since the law of $(R_t)_{t\geq 0}$ does not depend on $x$ such that
$|x|=A_0$ nor on $r\geq 0$, it suffices to check that there is $K>0$ such that
$\PR(\sup_{t\geq 0} R_t \leq K)>0$. By Step 1, the conclusion, with $B=A_0+K$, will follow.

\vip

Set $V(y)=a \log \log (A_0^2 + |y|^2)/ 4 -a \log \log (A_0^2)/ 4 $ 
for all $y\in \R^d$, consider $y_0\in\R^d$ such that $|y_0|=1$, as well as the diffusion
process
$$
Y_t=y_0+ B_t - \frac 12 \intot \beta_s \nabla V(Y_s)\dd s.
$$
Observe that $V$ satisfies $(A)$ with 
$c_*=0$. We consider now the bounded connected open set $G=B(K)$, where $K>1$ 
is large enough so that
for $y \in \partial G$, $V(y)=a \log \log (A_0^2+K^2)/4 - a \log \log (A_0^2)/4 >c$. 
We also have $\{y \in \R^d : V(y)=0\}=\{0\} \subset G$.
By Lemma \ref{lemzero}-(i), since $y_0 \in G$, we conclude that 
$\PR(\forall\;t\geq 0,\;|Y_t|<K )>0$.
Finally, one can check that $(|Y_t|)_{t\geq 0}=(R_t)_{t\geq 0}$
in law, by applying the It\^o formula,
using that $\frac{y}{2|y|} \cdot \nabla V(y)=b(|y|)$. 
All in all, $\PR(\sup_{t\geq 0} R_t \leq K)>0$ as desired.
\end{proof}

We can now give the

\begin{proof}[Proof of Theorem \ref{mr} under $(A)$ and $(H_1(a))$ with $a>c(d-2)/2$.]
We consider the solution $(X_t)_{t\geq 0}$ to \eqref{eds}, 
which is global by Lemma \ref{lemh11}-(i),
denote by $\cF_t=\sigma(\{X_s, s\in [0,t]\})$, and recall that $B>A_0$ and $p>0$ were 
introduced in Lemma \ref{lemh12}.
We introduce the sequence of stopping times 
$S_0\leq T_1\leq S_1\leq T_2 \leq S_2\leq ...$, with
$S_0=0$ and, for all $n\geq 0$, $T_{n+1}=\inf\{t>S_n : |X_t|\geq B\}$ 
and $S_{n+1}=\inf\{t>T_{n+1} : |X_t|\leq A_0\}$,
with the convention that $\inf \emptyset = \infty$. 
In particular, $T_n=\infty$ implies that
$S_k=T_k=\infty$ for all $k\geq n$. Our goal is to verify that 
a.s., there is $N\geq 1$ such that
$T_N = \infty$, implying that $\limsup_{t\to\infty} |X_t|\leq B$, so that
$\sup_{t \geq 0} |X_t| <\infty$ a.s.,
whence the conclusion by Lemma \ref{lemzero}-(ii).

\vip

Using the strong Markov property, one deduces that for all $n\geq 1$,
$\{T_n <\infty\} \subset \{S_n<\infty\}$ by Lemma \ref{lemh11}-(ii), while 
$\PR(T_{n+1}=\infty | \cF_{S_n}  ) \geq p$ on the event $\{S_n<\infty\}$ 
by Lemma \ref{lemh12}.
Hence for all $n\geq 1$,
$$
\PR(T_{n+1}<\infty) = 
\E[\indiq_{\{S_n<\infty\}} \PR(T_{n+1}<\infty | \cF_{S_n}  )] \leq (1-p)\PR(S_{n}<\infty)=
(1-p)\PR(T_{n}<\infty).
$$
Hence $\PR(\cap_{k\geq 1}\{T_k<\infty\})=\lim_{n\to\infty}\PR(\cap_{k=1}^n\{T_k<\infty\})=
\lim_{n \to \infty} \PR(T_n <\infty)=0$ as desired.
\end{proof}

\section{Proof under $(H_2)$}\label{hh2}

Under $(H_2)$, the proof is rather simpler. It entirely relies on
the following lemma.

\begin{lem}\label{troc}
Consider a $1$-dimensional Brownian motion
$(W_t)_{t\geq 0}$. For $c>0$ and $\delta>0$, consider the $(0,\infty)$-valued 
pathwise unique solution $(S^{\delta}_t)_{t\geq 0}$ to
$$
S^{\delta}_t =c \delta +  W_t + \frac{d-1}{2} \int_0^t \frac{\dd s}{S_s^{\delta}}
- \frac{1}{2c \delta }\intot \log(1+s) \dd s.
$$
For any $\delta_0>0$ and $\eta>0$, it holds that
$$
p(\delta_0,\eta)=\inf_{\delta \geq \delta_0} 
\PR\Big(\sup_{t\geq 0} S_t^{\delta} \leq c \delta(1+\eta) \Big) >0.
$$
\end{lem}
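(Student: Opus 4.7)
The plan is to use a coupling argument to reduce the infimum over $\delta$ to the single case $\delta=\delta_0$, and then to establish positivity for that case by a two-phase analysis.

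After the space--time rescaling $\hat{S}^\delta_u := S^\delta_{(c\delta)^2 u}/(c\delta)$, the rescaled process starts at $1$ and satisfies
$$\hat{S}^\delta_u = 1 + B_u + \frac{d-1}{2}\int_0^u \frac{\dd v}{\hat{S}^\delta_v} - \frac{1}{2}\int_0^u \log(1+(c\delta)^2 v)\,\dd v$$
for a standard Brownian motion $B$, and the event of interest becomes $\{\sup_u \hat{S}^\delta_u \leq 1+\eta\}$. The drift $(v,s)\mapsto \frac{d-1}{2s} - \frac{1}{2}\log(1+(c\delta)^2 v)$ is pointwise nonincreasing in $\delta$, so realizing all $\{\hat{S}^\delta\}_{\delta\geq\delta_0}$ on a single probability space with one common $B$, the standard comparison theorem for one-dimensional SDEs gives $\hat{S}^\delta_u \leq \hat{S}^{\delta_0}_u$ for all $u\geq 0$ almost surely. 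Since each $\hat S^\delta$ has the correct distribution, this yields
$$p(\delta_0,\eta)\geq \PR\Big(\sup_{u\geq 0}\hat{S}^{\delta_0}_u \leq 1+\eta\Big) = \PR\Big(\sup_{t\geq 0} S^{\delta_0}_t \leq c\delta_0(1+\eta)\Big),$$
reducing the problem to strict positivity of this single probability.

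To establish positivity at $\delta=\delta_0$, set $L=c\delta_0(1+\eta)$ and $T^* = \exp((d-1)/(1+\eta))-1$, chosen so that $\log(1+T^*)/(2c\delta_0)=(d-1)/(2L)$. On the finite interval $[0,T^*]$, Girsanov's theorem (Novikov is immediate since $\phi(s)=\log(1+s)/(2c\delta_0)$ is deterministic) exhibits $P$ as equivalent to a measure $Q$ under which $S^{\delta_0}$ is a $d$-dimensional Bessel process from $c\delta_0$; Bessel path continuity yields $\PR(c\delta_0/2\leq S^{\delta_0}_t\leq c\delta_0(1+\eta/2)\;\forall t\in [0,T^*])>0$ under $Q$, hence also under $P$. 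By the strong Markov property at $T^*$, it remains to lower-bound $\PR(\sup_{t\geq T^*}S^{\delta_0}_t\leq L \mid S^{\delta_0}_{T^*}=s_0)$ uniformly in $s_0\in [c\delta_0/2, c\delta_0(1+\eta/2)]$. For this, I would pick a second threshold $T_1\geq T^*$ large enough that the drift $\frac{d-1}{2s}-\phi(t)$ is below some $-\gamma<0$ on the whole band $s\in[c\delta_0\eta/4,L]$ for $t\geq T_1$, use Girsanov once more on $[T^*,T_1]$ to keep $S^{\delta_0}$ in a good band, and then apply the explicit identity $\PR(\inf_{t\geq 0}(W_t+\gamma t)\leq -a)=e^{-2\gamma a}$ to the process $Z_t := L-S^{\delta_0}_{T_1+t}$ (after absorbing the spatial part of the drift via a further change of measure) to bound uniformly the probability that $Z$ ever reaches $0$. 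Combining the three factors via the strong Markov property gives $\PR(\sup_t S^{\delta_0}_t\leq L)>0$.

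The hard part is this last, infinite-time step: the singular positive drift $\frac{d-1}{2s}$ can in principle drive rebounds of $S^{\delta_0}$ from small values back across $L$, so bounding $\PR(Z\text{ hits }0)$ cannot rely on a naive comparison with a Bessel process. A rigorous treatment requires either restricting to a "good" event on which $S^{\delta_0}$ stays bounded below (thus keeping the positive drift controlled and allowing a pathwise comparison with a Brownian motion plus constant negative drift), or a tailored time-dependent Lyapunov supermartingale exploiting the explicit Gamma$(d,\log(1+t)/(c\delta_0))$ form of the quasi-stationary measure, whose tail above $L$ decays like $(1+t)^{-(1+\eta)}$ and is therefore summable over unit time intervals.
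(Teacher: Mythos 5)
Your first step — the space-time rescaling to $\hat S^\delta$ and the monotone-in-$\delta$ coupling via a shared Brownian motion — is correct and matches the paper's mechanism (which passes through $R^\delta_t=(\delta_0/\delta)S^\delta_{(\delta/\delta_0)^2 t}$ and then a version $H^\delta$ driven by a single $W$). This cleanly reduces the infimum to one process.

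The second step, however, is where the genuine gap sits, and you have flagged it yourself: the two-phase Girsanov/explicit Brownian estimate does not close at infinite time, because the singular repulsion $(d-1)/(2s)$ near $s=0$ can repeatedly relaunch $S^{\delta_0}$ across the level $L$, and neither of the proposed repairs (a ``good event'' restriction or a time-dependent Lyapunov function tied to the quasi-stationary Gamma law) is actually carried out. There is no reason to expect a short elementary estimate here, because this is exactly where the whole scaffolding of the paper must be invoked. The paper's resolution is different in kind: instead of comparing to $S^{\delta_0}$ itself, it compares the whole family $H^\delta$ (pathwise, with the same Brownian motion) to a process $M$ whose time-dependent drift $-\tfrac{1}{2c\delta_0}\log(1+t)$ has been \emph{spatially smoothed} by the factor $b(r)=r/\sqrt{\e^2+r^2}\in(0,1)$, so that $M$ still dominates $H^\delta$. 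The point of the smoothing is that $M$ is then the modulus $|Y_t|$ of the $d$-dimensional annealing diffusion with the $C^\infty$ radial potential $V(y)=\sqrt{\e^2+|y|^2}-\e$, whose critical depth is $c^V_*=0$. With $G=B(c\delta_0(1+\eta))$ and $\e$ chosen so that $V\equiv c\delta_0(1+\eta/2)$ on $\partial G$, Lemma~\ref{lemzero}-(i) (with cooling constant $c\delta_0>c^V_*$) yields $\PR(\forall t,\ Y_t\in G)>0$, i.e.\ $\PR(\sup_t M_t<c\delta_0(1+\eta))>0$. Lifting to $\R^d$ replaces the singular one-dimensional boundary at $s=0$ by an ordinary interior point, and the infinite-time confinement becomes precisely the localization statement supplied by \cite{hks} through Lemma~\ref{lemzero}. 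Replace your two-phase Brownian argument by this comparison-to-$M$ plus Lemma~\ref{lemzero}-(i), and the proof closes.
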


\begin{proof}
The strict positivity of $S^\delta$
follows from Proposition \ref{rap}-(ii) and the Girsanov theorem,
since $d\geq 2$ and since the additional drift is bounded (locally in time).
First, $R^\delta_t=(\delta_0/\delta)S^\delta_{(\delta/\delta_0)^2 t}$ solves
\begin{equation}\label{troce}
R^\delta_t= c\delta_0 + W^{\delta}_t+ \frac{d-1}{2}\intot\frac{\dd s}{R_s^{\delta}}
-\frac1{2c\delta_0}\intot\log(1+(\delta/\delta_0)^2s) \dd s,
\end{equation}
where $W^{\delta}_t=(\delta_0/\delta)W_{(\delta/\delta_0)^2 t}$ is a Brownian motion. 
We introduce $H^\delta$ solving \eqref{troce} with $W^\delta$ replaced by $W$. We claim that
for any $\delta\geq \delta_0$, $H^\delta_t \leq M_t$ for all $t\geq 0$ a.s., where $M$ solves
$$
M_t=c\delta_0 + W_t+ \frac{d-1}{2}\intot\frac{\dd s}{M_s}
-\frac1{2c\delta_0}\intot\log(1+ s)b(M_s) \dd s,
$$
where $b(r)=(\e^2+r^2)^{-1/2}r$, with $\e=c\delta_0[(1+\eta)^2-(1+\eta/2)^2]/(2+\eta)$. 
Indeed, we write as usual
$$
\frac \dd{\dd t} (H^{\delta}_t-M_t)_+^2 =(H^{\delta}_t-M_t)_+
\Big((d-1)\Big[\frac 1{H^{\delta}_t}-\frac 1{M_t}\Big] + \frac1{c\delta_0}
\Big[\log(1+t)b(M_t)-\log(1+(\delta/\delta_0)^2t)
\Big]\Big)\leq 0.
$$
Hence for all $\delta\geq \delta_0$, we have
\begin{align*}
\PR\Big(\sup_{t\geq 0} S_t^{\delta} <c \delta(1+\eta) \Big) \ &=\  
\PR\Big(\sup_{t\geq 0} R_t^{\delta} < c \delta_0 (1+\eta) \Big) \\ 
&=\ \PR\Big(\sup_{t\geq 0} H_t^{\delta} < c \delta_0(1+\eta) \Big) \\
&\geq\ \PR\Big(\sup_{t\geq 0} M_t < c\delta_0 (1+\eta) \Big)
\end{align*}
and it suffices to prove that $p:=\PR(\sup_{t\geq 0} M_t < c\delta_0 (1+\eta) )>0$.

\vip

We introduce $V(y)=(\e^2+|y|^2)^{1/2}-\e$, which satisfies $(A)$ with $c_*=0$.
We consider $y_0\in \R^d$ such that $|y_0|= c\delta_0$, as well as the diffusion process
$$ 
Y_t=y_0+ B_t - \frac 12 \intot \frac{\log (1+ s)}{c \delta_0} \nabla V(Y_s)\dd s.
$$
One can check that $(|Y_t|)_{t\geq 0}=(M_t)_{t\geq 0}$ in law, using the It\^o formula and that
$\frac{y}{|y|} \cdot \nabla V(y)=b(|y|)$. We then observe that
$V$ satisfies $(A)$ with $c_*=0$ and we consider the the bounded 
connected open set $G=B(c\delta_0(1+\eta))$.
We have $y_0 \in G$, $\{y \in \R^d : V(y)=0\}=\{0\} \subset G$ and, by definition of $\e$,
$\partial G \subset \{y \in \R^d : V(y)=c\delta_0 (1+\eta/2)\}$.
Applying Lemma \ref{lemzero}-(i) (with $c$ replaced by $c\delta_0>0=c_*$),
we conclude that 
$\PR(\forall\;t\geq 0,\;|Y_t|<c\delta_0 (1+\eta) )
=\PR(\forall\;t\geq 0,\;Y_t\in G)>0$ as desired.
\end{proof}

Once this is seen, we can give the

\begin{proof}[Proof of Theorem \ref{mr} under $(A)$ and $(H_2(\alpha))$ with $\alpha>c$.]
We consider the solution $(X_t)_{t\in [0,\zeta)}$ to \eqref{eds} as in Remark \ref{nn}.
By Lemma \ref{lemzero}-(ii), we only have to verify that a.s., $\zeta=\infty$
and $\sup_{t\geq 0} |X_t|<\infty$.

\vip

{\it Step 1.} In Assumption $(H_2(\alpha))$, we have $\lim_{i\to \infty} a_i=\infty$,
because $a_{i+1}\geq b_i\geq a_i+\alpha\delta_i\geq a_i+\alpha\delta_0$.
We thus may consider $i_0\geq 1$ such that $a_{i_0}>|x_0|$.
We introduce $\cF_t=\sigma(X_{s}\indiq_{\{\zeta>s\}}, s\in [0,t])$, as well as 
the sequence of stopping times
$T_i=\inf\{t \geq 0 : |X_t|=a_i\}$ and $S_i=\inf\{t \geq 0 : |X_t|=b_i\}$, for $i\geq i_0$,
with the usual convention that $\inf \emptyset =\infty$. We have 
$0<T_{i_0}\leq S_{i_0} \leq T_{i_0+1}\leq S_{i_0+1} \dots$.
It suffices to prove that $\lim_{i \to \infty} \PR(S_i < \infty)=0$. 

\vip

Indeed, this will imply that
$\PR(\cap_{k\geq 1}\{S_k<\infty\})=
\lim_{i \to \infty} \PR(S_i <\infty)=0$, so that there will a.s. exist $I$  such that 
$S_I=\infty$.
Consequently, we will a.s. have $\zeta=\infty$ and $\sup_{t\geq 0} |X_t| \leq b_I <\infty$.

\vip

{\it Step 2.}
We fix $\eta>0$ such that $\alpha>c(1+\eta)$. It suffices to verify that 
for all $i\geq i_0$, we have $\PR(S_i=\infty | \cF_{T_i})\geq p(\delta_0,\eta)$
on $\{T_i<\infty\}$, where $p(\delta_0,\eta)$ was defined in Lemma \ref{troc}.

\vip

Indeed, this will imply that $\lim_{i \to \infty} \PR(S_i < \infty)=0$, because 
for all $i\geq i_0+1$,
\begin{align*}
\PR(S_i<\infty)=\E[\indiq_{\{T_i<\infty\}} \PR(S_i<\infty | \cF_{T_i})]
\ &\leq\ (1-p(\delta_0,\eta)) \PR(T_{i}<\infty)\\
& \leq \ (1- p(\delta_0,\eta)) \PR(S_{i-1}<\infty).
\end{align*}

{\it Step 3.} To conclude, we fix $i\geq i_0$ and apply the It\^o formula:
on $\{T_i<\infty\}$, for $t\in [0,\zeta-T_i)$,
$$
|X_{T_i+t}| = a_i + W^i_{t} + \frac{d-1}2 \int_{0}^{t} \frac{\dd s}{|X_{T_i+s}|}
- \frac 12\intot \frac{\beta_{T_i+s} X_{T_i+s}\cdot \nabla U(X_{T_i+s}) }{|X_{T_i+s}|}\dd s,
$$
the Brownian motion $W^i_t=\int_{T_i}^{T_i+t} (\frac{X_s}{|X_s|}\indiq_{\{s<\zeta\}}+ 
\bu\indiq_{\{s\geq \zeta\}})
\cdot \dd B_s$ being independent of $\cF_{T_i}$. Here we introduced some 
arbitrary deterministic unitary vector
$\bu \in \R^d$. We introduce, still on $\{T_i<\infty\}$,
$$
R^i_t= c\delta_i+W^i_t + \frac{d-1}2 \int_{0}^{t} \frac{\dd s}{R^i_s}
- \frac 1{2c\delta_i } \intot \log(1+s)\dd s.
$$
This process is well-defined, see Lemma \ref{troc}.
We now check that a.s. on  $\{T_i<\infty\}$, it holds that $|X_{T_i+t}| \leq a_i+R^i_t$ 
for all $t \in [0,S_i-T_i)$. Observe that this makes sense, because $\zeta>S_i$.

\vip

Using that $t\in[0,S_i-T_i)$ implies that $|X_{T_i+t}|< b_i$, that 
$(|X_{T_i+t} |-a_i-R^i_t)_+>0$ implies that $|X_{T_i+t}|>a_i$, and that 
$|x|\in[a_i,b_i]$ implies that $x\cdot\nabla U(x)\geq |x|/\delta_i$,
we see that for all $t \in [0,S_i-T_i)$
\begin{align*}
&\frac \dd{\dd t} (|X_{T_i+t} |-a_i-R^i_t)_+^2\\
=&\  (|X_{T_i+t} |-a_i-R^i_t)_+\Big((d-1)\Big[\frac1{|X_{T_i+t}|}
-\frac 1{R^i_t}\Big] - \frac{\beta_{T_i+t} X_{T_i+t}\cdot \nabla U(X_{T_i+t}) }{|X_{T_i+t}|} + 
\frac {\log (1+t)}{c \delta_i }\Big)\\
\leq&\  (|X_{T_i+t} |-a_i-R^i_t)_+\Big(-\frac{\beta_{T_i+t}}{\delta_i} +  
\frac {\log (1+t)}{c \delta_i }\Big) \leq 0,
\end{align*}
since finally $\beta_{T_i+t}\geq c^{-1}\log(1+t)$.
Since $|X_{T_i} |-a_i-R^i_0=-c\delta_i\leq 0$, we conclude that indeed, 
on  $\{T_i<\infty\}$, 
it holds that $|X_{T_i+t}| \leq a_i+R^i_t$ for all $t \in [0,S_i-T_i)$.

\vip

Hence 
\[
\{T_i<\infty,\sup_{t\geq 0}R^i_t < c\delta_i(1+\eta)\}
\subset {\{T_i<\infty, \sup_{t\in [0,S_i-T_i)}|X^i_{T_i+t}| < a_i+c\delta_i(1+\eta)\}},
\] 
which is included in 
$\{S_i=\infty\}$ since finally $a_i+c\delta_i(1+\eta)<a_i+\alpha \delta_i = b_i$
by assumption. Hence on $\{T_i<\infty\}$,
$$
\PR(S_i=\infty|\cF_{T_i})\geq \PR\Big(\sup_{t\geq 0}R^i_t < c\delta_i(1+\eta)\Big) 
\geq p(\delta_0,\eta)
$$
by Lemma \ref{troc}, since $\delta_i\geq \delta_0$. The proof is complete.
\end{proof}

\section{Proof under $(H_3)$}\label{hh3}

The proof under $(H_3)$ is very similar, in its principle, to the proof under $(H_2)$.
We start with the following variation of Lemma \ref{troc}.

\begin{lem}\label{troc3}
Consider a $1$-dimensional Brownian motion
$(W_t)_{t\geq 0}$. For $c>0$ and $\kappa>0$, consider the $(0,\infty)$-valued 
pathwise unique solution $(R^{\kappa}_t)_{t\geq 0}$ to
$$
R^{\kappa}_t =c +  W_t + \frac{d-1}{2} \int_0^t \frac{\dd s}{R_s^\kappa}
- \frac{1 }{ 2c}\intot \log(1+s/(2\kappa)^2) \dd s.
$$
For any $\eta>0$, any $\e>0$, it holds that
$$
q(\eta,\e)=\inf_{\kappa>0} 
\PR\Big(\sup_{t\geq 0} R_t^{\kappa} \leq c \max\{\e \kappa,1\}(1+\eta) \Big) >0.
$$
\end{lem}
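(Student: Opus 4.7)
The plan is to follow the scheme of Lemma \ref{troc}, but with a two-regime rescaling that absorbs the $\kappa$-dependence of both the initial condition and the target. Strict positivity of $R^\kappa$ on $[0,\infty)$ follows from Proposition \ref{rap}-(b) and the Girsanov theorem, since $d\geq 2$ and the additional drift $\log(1+s/(2\kappa)^2)/(2c)$ is locally bounded in time.

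Set $\lambda_\kappa=\max\{\e\kappa,1\}$ and define $\tilde R^\kappa_t=R^\kappa_{\lambda_\kappa^2 t}/\lambda_\kappa$, so that $\tilde R^\kappa_0=c/\lambda_\kappa\leq c$ and the target $\sup R^\kappa\leq c\lambda_\kappa(1+\eta)$ is equivalent to $\sup\tilde R^\kappa\leq c(1+\eta)$. By Brownian scaling, $\tilde R^\kappa$ satisfies an SDE of the same shape as $R^\kappa$, now with drift coefficient $-\frac{\lambda_\kappa}{2c}\log(1+\lambda_\kappa^2 t/(2\kappa)^2)$. A short case analysis shows that, for every $\kappa>0$,
$$
\lambda_\kappa\log\Big(1+\frac{\lambda_\kappa^2 t}{(2\kappa)^2}\Big)\geq \log\Big(1+\frac{t\e^2}{4}\Big):
$$
when $\lambda_\kappa=1$ this uses $1/(2\kappa)^2\geq\e^2/4$, and when $\lambda_\kappa=\e\kappa$ the argument of the logarithm already equals $1+t\e^2/4$ while the prefactor $\lambda_\kappa\geq 1$.

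I then mimic Lemma \ref{troc}: introduce $b(r)=r/\sqrt{(\e')^2+r^2}$ for an $\e'>0$ to be chosen, and couple $\tilde R^\kappa$ with
$$
M_t=c+\tilde W_t+\frac{d-1}{2}\intot\frac{\dd s}{M_s}-\frac{1}{2c}\intot\log(1+s\e^2/4)\,b(M_s)\,\dd s
$$
driven by the same Brownian motion $\tilde W$. Since $b\leq 1$, combining the inequality above with the standard $\frac{\dd}{\dd t}(\tilde R^\kappa_t-M_t)_+^2$ computation (whose $(d-1)(1/\tilde R^\kappa-1/M)$ part is automatically non-positive on $\{\tilde R^\kappa>M\}$) yields $\tilde R^\kappa_t\leq M_t$ for all $t\geq 0$ a.s., uniformly in $\kappa$. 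It then suffices to prove the $\kappa$-free statement $\PR(\sup_{t\geq 0}M_t<c(1+\eta))>0$.

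For the last step, $M$ is the modulus of the $\R^d$-valued diffusion $dY_t=dB_t-\frac12\frac{\log(1+t\e^2/4)}{c}\nabla V(Y_t)\dd t$ with $|y_0|=c$, where $V(y)=\sqrt{(\e')^2+|y|^2}-\e'$ and $\frac{y}{|y|}\cdot\nabla V(y)=b(|y|)$. To match the template $\beta_t=\beta_0+\log(1+t)/c$ required by Lemma \ref{lemzero}-(i), I apply the time change $t=4u/\e^2$ together with the spatial rescaling $\hat Z_u=\frac{\e}{2}Y_{4u/\e^2}$; a direct computation shows that $\hat Z$ solves \eqref{eds} with the potential $\hat V(z)=V(2z/\e)$ (of class $C^\infty$, satisfying $(A)$ with $c_*^{\hat V}=0$) and $\hat\beta_u=\log(1+u)/c$. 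Taking $G=B(c\e(1+\eta)/2)$, one verifies $\hat Z_0\in G$, $\{\hat V=0\}=\{0\}\subset G$, and $\partial G\subset\{\hat V\geq\alpha\}$ with $\alpha=\sqrt{(\e')^2+c^2(1+\eta)^2}-\e'$, which lies in $(0,c)$ as soon as $\e'>c\eta(2+\eta)/2$. Lemma \ref{lemzero}-(i) then delivers the required positive probability. The main delicate point is the bookkeeping in the two-regime drift comparison, which is where the factor $\max\{\e\kappa,1\}$ in the statement is genuinely used; the rest is a direct analogue of the end of Lemma \ref{troc}.
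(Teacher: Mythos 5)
Your proof is correct, and it takes a genuinely different (though related) route from the paper's. The paper proves Lemma \ref{troc3} as a pure reduction to Lemma \ref{troc}: it rescales by $2\kappa$, observing that $T^\kappa_t=(2\kappa)^{-1}R^\kappa_{(2\kappa)^2t}$ has the law of $S^\delta$ with $\delta=1/(2\kappa)$, and then argues in two cases — for $\kappa\le 1/\e$ it invokes directly the uniformity in $\delta\ge\delta_0=\e/2$ built into Lemma \ref{troc}, and for $\kappa>1/\e$ it first dominates $T^\kappa$ by $S^{\e/2}$ — concluding $q(\eta,\e)=p(\e/2,\eta)$ with no new potential-theoretic work. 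You instead rescale by $\lambda_\kappa=\max\{\e\kappa,1\}$, use the two-regime inequality $\lambda_\kappa\log(1+\lambda_\kappa^2 t/(2\kappa)^2)\ge\log(1+t\e^2/4)$ (which checks out in both cases) to dominate all rescaled processes by a single $\kappa$-free process $M$, and then re-run the endgame of Lemma \ref{troc} from scratch: identifying $M$ in law with the modulus of a gradient diffusion for $V(y)=\sqrt{(\e')^2+|y|^2}-\e'$, and adding the time change $t=4u/\e^2$ with the spatial rescaling $\hat Z_u=\tfrac{\e}{2}Y_{4u/\e^2}$ to bring the factor $\log(1+t\e^2/4)$ into the $\beta_u=\log(1+u)/c$ template required by Lemma \ref{lemzero}-(i); I verified this change of variables and it does produce \eqref{eds} with potential $\hat V(z)=V(2z/\e)$ and the same $c$. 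One small remark: since $c_*^{\hat V}=0$, any $\alpha\in(0,c)$ not exceeding the (positive) boundary value of $\hat V$ on $\partial G$ works in Lemma \ref{lemzero}-(i), so your constraint $\e'>c\eta(2+\eta)/2$ is convenient but not actually needed. What the paper's route buys is economy — Lemma \ref{troc} was stated with an infimum over $\delta\ge\delta_0$ precisely so it could be reused here as a black box; what yours buys is a single unified comparison with no case split in the limiting argument, at the cost of repeating the radial-potential construction and an extra rescaling step.
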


\begin{proof}
As in Lemma \ref{troc}, $R^\kappa$ does never reach zero by Proposition \ref{rap} 
and the Girsanov theorem.

\vip

We observe that $T^\kappa_t=(2\kappa)^{-1}R^\kappa_{(2\kappa)^2 t}$ solves, 
with the brownian motion
$W^\kappa_t= (2\kappa)^{-1}W^\kappa_{(2\kappa)^2 t}$,
$$
T^{\kappa}_t =\frac c {2\kappa} +  W_t^\kappa + \frac{d-1}{2} \int_0^t \frac{\dd s}{T_s^{\kappa}}
- \frac{2\kappa }{ 2c}\intot \log(1+s) \dd s.
$$
Hence $T^\kappa$ has the same law as $S^\delta$, see Lemma \ref{troc}, with $\delta=1/(2\kappa)$.
Thus for all $\kappa \in (0,1/\e]$, 
$\PR(\sup_{t\geq 0} R_t^{\kappa} \leq c(1+\eta) )=
\PR(\sup_{t\geq 0} T_t^{\kappa} \leq (c/2\kappa)(1+\eta))\geq p(\e/2,\eta)$.

\vip

If now $\kappa \in (1/\e,\infty)$, since $c/(2\kappa)<c \e/2$ and $2\kappa \geq 2/\e$, we see
that, in law, $T^\kappa_t \leq S^{\e/2}_t$ by a comparison argument.
As a consequence, $\PR(\sup_{t\geq 0} R_t^{\kappa} \leq c\e \kappa (1+\eta))=
\PR(\sup_{t\geq 0} T_t^{\kappa} \leq c\e (1+\eta)/2)
\geq \PR(\sup_{t\geq 0} S_t^{\e/2} \leq c\e (1+\eta)/2) \geq  p(\e/2,\eta)$.
The conclusion follows with $q(\eta,\e)=p(\e/2,\eta)>0$ which is positive, see Lemma \ref{troc}.
\end{proof}

\begin{proof}[Proof of Theorem \ref{mr} under $(A)$ and $(H_3(\alpha,\beta_0))$ 
with $\alpha>c$.]
We consider the solution $(X_t)_{t\in [0,\zeta)}$ to \eqref{eds} as in Remark \ref{nn}.
By Lemma \ref{lemzero}-(ii), we only have to verify that a.s., $\zeta=\infty$
and $\sup_{t\geq 0} |X_t|<\infty$.

\vip

{\it Step 1.}
Since $\cZ_i^-$ increases to $\R^d$ by assumption, we can find $i_0\geq 1$ such that 
$x_0$ belongs to the interior of $\cZ_{i_0}^-$.
We also introduce $\cF_t=\sigma(X_{s}\indiq_{\{\zeta>s\}}, s\in [0,t])$, as well as
the sequence of stopping times  $T_i=\inf\{t \geq 0 : X_t \in \partial \cZ_i^-\}$ and 
$S_i=\inf\{t \geq 0 : X_t \in \partial \cZ_i^+\}$, for $i\geq i_0$,  
with the usual convention that $\inf \emptyset =\infty$.
Since $\cZ_{i_0}^- \subset (\cZ_{i_0}^+)^c \subset \cZ_{i_0+1}^- \subset (\cZ_{i_0+1}^+)^c \dots$,
we have $0<T_{i_0}\leq S_{i_0} \leq T_{i_0+1}\leq S_{i_0+1} \dots$.
It suffices to verify that $\lim_{i \to \infty} \PR(S_i < \infty)=0$. 

\vip

Indeed, this will tell us that
$\PR(\cap_{k\geq 1}\{S_k<\infty\})=0$. There will thus a.s. exist $I$
such that $S_I=\infty$, so that $X_t \in \cZ_{I+1}^-$ for all $t\geq 0$,
whence the conclusion, since $\cZ_{I+1}^-$ is bounded.

\vip

{\it Step 2.}
We fix $\eta>0$ such that $\alpha>c(1+\eta)$. 
As in the proof under $(H_2(\alpha))$, it is enough to verify that
for all $i\geq 1$, we have $\PR(S_i=\infty | \cF_{T_i})\geq q(\e,\eta)$
on $\{T_i<\infty\}$, where $q(\e,\eta)$ is defined in Lemma \ref{troc3}
and where $\e>0$ is the constant introduced in Assumption $(H_3(\alpha,\beta_0))$.

\vip

{\it Step 3.}
Recall Assumption $(H_3(\alpha,\beta_0))$ and that for $i\geq 1$,
$\cZ_i$ is $C^\infty$-diffeomorphic to the annulus $\cC=\{x\in \R^d : |x|\in (1,2)\}$.
It is a tedious but classical exercise to prove that 
for each $i\geq i_0$, we can build a smooth function $V_i:\R^d \to [0,\infty)$
such that $V_i=U$ on $\cZ_i$, such that $V_i \leq u_i$ on $\cZ_i^-$, $V_i\geq v_i$ on
$\cZ_i^+$, $|\nabla V_i| \leq 2\kappa_i$ on $\R^d$, and $\nabla V_i(x) \neq 0$ for all
$x\in \R^d \setminus \{x_0\}$.
Observe that since $\nabla U$ does not vanish on $\bar \cZ_i$, it holds that 
$V_i(x)=U(x)\in(u_i,v_i)$
for all $x \in \cZ_i$ because else, $U$ would have a local extremum inside $\cZ_i$.

\vip

{\it Step 4.} In this whole step, we fix $i\geq i_0$ and work on $\{T_i<\infty\}$. 
For all $t\in [0,\zeta-T_i)$,
\[V_i(X_{T_i+t}) = u_i + \int_0^t |\nabla V_i(X_{T_i+s})| \dd W^i_{s} 
+ \frac{1}2 \int_{0}^{t} \Big[ \Delta V_i(X_{T_i+s}) -\beta_{T_i+s} \nabla U(X_{T_i+s})
\cdot \nabla V_i(X_{T_i+s})\Big] \dd s,
\]
the Brownian motion $W^i_t=\int_{T_i}^{T_i+t} (\frac{\nabla V_i(X_{s})}{|\nabla V_i(X_{s})|}
\indiq_{\{s<\zeta\}} + \bu \indiq_{\{s\geq \zeta\}})\cdot \dd B_s$ being independent of 
$\cF_{T_i}$. 
We introduced some deterministic unit vector $\bu \in \R^d$ and used that a.s., 
$X_{t}\neq x_0$ for all $t\in (0,\zeta)$ (by the Girsanov theorem, recall \eqref{eds}
and that $d\geq 2$) 
and thus $|\nabla V_i(X_{t})|>0$ for all $t> 0$.
We next introduce the time-change 
$\theta^i_t=\intot|\nabla V_i(X_{T_i+s})|^2 \dd s$, which is continuous and strictly 
increasing on $[0,\zeta-T_i)$, as well as its inverse 
$\tau^i_t : [0,\theta^i_{\zeta-T_i})\to \R_+$.
For all $t\in [0,\theta^i_{\zeta-T_i})$, we have
\[
V_i(X_{T_i+\tau^i_t}) = u_i + \bar W^i_t  
+ \frac{1}2 \int_{0}^{t} \Big[ \frac{\Delta V_i(X_{T_i+\tau^i_s})}
{|\nabla V_i(X_{T_i+\tau^i_s})|^2} -
\beta_{T_i+\tau^i_s} \frac{\nabla V_i(X_{T_i+\tau^i_s})\cdot \nabla U(X_{T_i+\tau^i_s})}
{|\nabla V_i(X_{T_i+\tau^i_s})|^2}
\Big] \dd s,
\]
for some Brownian motion $\bar W^i$ independent of $\cF_{T_i}$, which can be built as 
follows: for a Brownian motion $\hat W$ independent 
of everything else (this is useless if $\theta^i_{\zeta-T_i}=\infty$ a.s.), set
$$
\bar W^i_t =\int_0^{\tau^i_{t} \land (\zeta-T_i) }
|\nabla V_i(X_{T_i+s})| \dd W^i_{s} + \int_{t\land\theta^i_{\zeta-T_i} }^t \dd \hat{W}_s,
$$
with the convention that $\tau^i_t\land (\zeta-T_i)=\zeta-T_i$ for all
$t\geq \theta^i_{\zeta-T_i}$.
We next introduce
$$
Y^i_t= c+\bar W^i_t + \frac{d-1}2 \int_{0}^{t} \frac{\dd s}{Y^i_s}
- \frac 1{2c} \intot \log(1+s/(2\kappa_i)^2)\dd s.
$$
This process is well-defined and positive, see Lemma \ref{troc3}. 
We could replace the strong repulsion term $(d-1)/(2Y^i_s)$ by a (weaker)
reflection term, but this allows us to make Lemmas \ref{troc} and \ref{troc3} more similar.
We now check that a.s. on  $\{T_i<\infty\}$, it holds that $V_i(X_{T_i+\tau^i_t})
\leq u_i+Y^i_t$ for all $t \in [0,\theta^i_{S_i-T_i})$.
Using that $t\in [0,\theta^i_{S_i-T_i})$ implies that $\tau^i_t < S_i-T_i$
and thus that $V_i(X_{T_i+\tau^i_t})< v_i$, that
$(V_i(X_{T_i+\tau^i_t})-u_i-Y^i_t)_+>0$ implies that $V_i(X_{T_i+\tau^i_t})>u_i$, and that 
$V_i(x) \in (u_i,v_i)$ implies that $x \in \cZ_i$, whence 
$\Delta V_i(x)/|\nabla V_i(x)|^2=\Delta U(x)/|\nabla U(x)|^2\leq \beta_0$ and
$\nabla V_i(x)\cdot \nabla U(x)/|\nabla V_i(x)|^2=1$,
we see that for all $t \in [0,\theta^i_{S_i-T_i})$,
\begin{multline*}
\frac \dd{\dd t} (V_i(X_{T_i+\tau^i_t})-u_i-Y^i_t)_+^2
= (V_i(X_{T_i+\tau^i_t})-u_i-Y^i_t)_+ \\
\times \Big( -\frac {d-1}{Y^i_t} 
+ \frac{\Delta V_i(X_{T_i+\tau^i_t})}
{|\nabla V_i(X_{T_i+\tau^i_t})|^2} 
 -
\beta_{T_i+\tau^i_t} \frac{\nabla V_i(X_{T_i+\tau^i_t})\cdot 
\nabla U(X_{T_i+\tau^i_t})}{|\nabla V_i(X_{T_i+\tau^i_t})|^2}
+ \frac {\log (1+t/(2\kappa_i)^2)}{c}\Big)\\
\leq  (V_i(X_{T_i+\tau^i_t})-u_i-Y^i_t)_+\Big(\beta_0 - \beta_{T_i+\tau^i_t}+
\frac {\log (1+t/(2\kappa_i)^2)}{c}\Big) \leq 0.
\end{multline*}
For the last inequality, we used that $\beta_{T_i+\tau^i_t}=\beta_0 + c^{-1}\log(1+\tau^i_t)$
and that $\tau^i_t \geq t/(2\kappa_i)^2$ for all $t \in [0,\theta^i_{\zeta-T_i})$, 
because $\theta^i_t \leq (2\kappa_i)^2 t$ for all $t\in [0,{\zeta-T_i})$, recall
that $|\nabla V_i|\leq 2\kappa_i$.
But
$ V_i(X_{T_i})-u_i-Y^i_0=-c\leq 0$, whence indeed, $V_i(X_{T_i+\tau^i_t})
\leq u_i+Y^i_t$ for all $t \in [0,\theta^i_{S_i-T_i})$.

\vip

On $\{T_i<\infty,\;\sup_{t\geq 0}Y^i_t < c \max\{1,\e\kappa_i\}(1+\eta)\}$,
we thus have $V_i(X_{T_i+\tau^i_t})< u_i+c \max\{1,\e\kappa_i\}(1+\eta)$ for all 
$t\in [0,\theta^i_{S_i-T_i})$, so that $V_i(X_{T_i+t})< u_i+c \max\{1,\e\kappa_i\}(1+\eta)$
for all $t\in [0,S_i-T_i)$, whence $S_i=\infty$, because
$V_i(x)=U(x)=v_i\geq u_i+\alpha \max\{1,\e\kappa_i\}>u_i+c \max\{1,\e\kappa_i\}(1+\eta)$ 
for all $x \in \partial \cZ_i^+$.
Hence on $\{T_i<\infty\}$,
$$
\PR(S_i=\infty|\cF_{T_i})\geq \PR\Big(\sup_{t\geq 0}Y^i_t < c\max\{1,\e\kappa_i\}(1+\eta)\Big) 
\geq q(\e,\eta)
$$
by Lemma \ref{troc3}.
\end{proof}

\section{Other proofs}\label{contrexxx}

We first verify that in $(H_1(a))$, the condition $a>c(d-2)/2$ is sharp.

\begin{proof}[Proof of Proposition \ref{contrex}]
We assume here that $\nabla U(x)=\frac{2\alpha x}{(1+|x|^2)(1+\log(1+|x|^2))}$, with 
$0<\alpha<c(d-2)/2$. 

\vip

{\it Step 1.} Considering the $1$-dimensional Brownian motion $W_t=\int_1^{t+1} \frac{X_s}{|X_s|}\cdot \dd B_s$,
which is independent from $X_1$, and denoting $S_t=|X_{t+1}|$, the It\^o formula reads (recall $\beta_0 = 0$)
$$
S_t=S_0+W_t + \frac{d-1}{2}\intot \frac{\dd s}{S_s} 
- \frac \alpha c \intot \frac{S_s \log(2+s)\dd s}{(1+S_s^2)(1+\log(1+S_s^2))}.
$$

{\it Step 2.} We set $\delta=d-2\alpha/c>2$, $\delta'=(\delta+2)/2 \in (2,\delta)$
and consider the Bessel process
$$
R_t = 1+ W_{t}+ \frac{\delta'-1}{2}\intot \frac{\dd s}{R_s}.
$$
By Proposition \ref{rap}-(d), we know that a.s.,
$\liminf_{t\to \infty} t^{-1/2}(\log t)^\nu R_t =\infty$, where $\nu=4/(\delta'-2)$.
Hence
$\limsup_{t\to \infty} [\log(2+t)]/[1+\log(1+R_t^2)] \leq 1$ a.s.
We fix $\eta=(\delta-2)c/(4\alpha)>0$, which gives $(d-1)/2-\alpha(1+\eta)/c
=(\delta'-1)/2$ and
we consider $K\geq  1$ large enough so that $\PR(\Omega_K)\geq 3/4$, where
$$
\Omega_K=\Big\{\hbox{for all } t\geq K, \;\;
R_t \geq 1 \; \hbox{ and } \; \frac{\log(2+t)}{1+\log(1+R_t^2)} \leq 1+\eta
\Big\}.
$$

{\it Step 3.} There is $A>0$ such that,
$\PR(\Omega'_K\ |\ S_0 \geq A)\geq 3/4$, where $\Omega'_K=\{S_{K} \geq R_K+1\}$.

\vip

Indeed, $S_{K}-R_K\geq S_0+Z_K$, where 
$Z_K=W_{K}- \alpha/c \int_0^{K} \log(2+s)\dd s-R_K$ is independent from $S_0$.
As a consequence, $\PR(\Omega'_K\ |\ S_0 \geq A) \geq \PR(Z_K\geq 1-A)$,
that goes to 1 as $A$ goes to infinity.

\vip

{\it Step 4.} We show that $\{S_0\geq A\} \cap \Omega_K \cap \Omega_K'
\subset \{\forall \; t\geq K,\; S_{t} > R_t\}$. This will conclude the proof,
since $\lim_{t\to \infty} R_t=\infty$ a.s. and, $\Omega_K$ being independent from $S_0$, 
\[\PR\left( \{S_0\geq A\} \cap \Omega_K \cap \Omega_K'\right)
\ = \  \PR(\Omega_K \cap \Omega_K'\ | \ S_0 \geq A) \PR(S_0\geq A) \ \geq\  \frac12 \PR(S_0\geq A),\]
which is positive by Girsanov's theorem, whatever the initial condition $x_0$.

\vip

We thus work on $\{S_0\geq A\} \cap  \Omega_K \cap \Omega_K'$ and introduce 
$\tau=\inf\{t \geq K : S_{t} \leq R_t\}$. For all
$t\in [K,\tau)$, we have
$$
\frac \dd{\dd t}(S_{t}-R_t)=\frac{d-1}{2S_{t}} - \frac{\alpha S_{t} \log(2+t)}{c(1+S_{t}^2)
(1+\log(1+S_{t}^2))}
- \frac{\delta'-1}{2R_t}\geq \frac{\delta'-1}{2}\Big(\frac 1{S_{t}}-\frac1{R_t}\Big),
$$
because $S_{t}/(1+S_{t}^2) \leq 1/S_{t}$, because $\log(2+t)/(1+\log(1+S_{t}^2)) \leq
\log(2+t)/(1+\log(1+R_t^2))\leq 1+\eta$ since $t\in [K,\tau)$ and since we work
on $\Omega_K$, and because $(d-1)/2-\alpha(1+\eta)/c=(\delta'-1)/2$.
Hence, still for $t\in [K,\tau)$,
$$
\frac \dd{\dd t}(S_{t}-R_t) \geq - \frac{\delta'-1}{2 R_tS_{t}}(S_{t}-R_t) \geq 
- \frac{\delta'-1}{2}(S_{t}-R_t),
$$
since $S_{t}\geq R_t \geq 1$ by definition of $\tau$ and $\Omega_K$. 
Finally, as $S_{K}-R_K\geq 1$ by definition of $\Omega_K'$, we conclude that
$S_{t}-R_t \geq \exp(-(\delta' - 1) t / 2)$ for all $t\in [K,\tau)$,
and this implies that $\tau=\infty$ as desired. 
\end{proof}

Finally, we give the

\begin{proof}[Proof of Proposition \ref{contrex2}]
We fix $p\geq 1$ and set $u_0=0$, $v_0=1$, and $u_i=\exp^{\circ p}(i)$, 
$v_i=u_i+1$ for $i\geq 1$.
We define the function $g:[0,\infty)\to [0,\infty)$,
continuous and linear by pieces, by $g(u_i)=2i$ and $g(v_i)=2i+3$ for all $i\geq 1$.
We then introduce a smooth version $h$ of $g$, with the very same table of
variations, such that
$h(r)= g(r)$ for all 
$r\in \cup_{i\geq 0}(\{u_i\}\cup[u_i+0.1,v_i-0.1]\cup\{v_i\}\cup[v_i+0.1,u_{i+1}-0.1])$ 
and such that $U:\R \to [0,\infty)$ defined by $U(x)=h(|x|)$ is $C^\infty$. 
Then, $U$ satisfies 
$(A)$ with $c_*=1$. 
It also satisfies $(H_2(2))$, with
$a_i=u_i+0.1$, $b_i=v_i-0.1$, $\delta_i=1/3$. Indeed, $|x| \in [a_i,b_i]$
implies that $\frac x{|x|}\cdot\nabla U(x)= h'(|x|)=3$, and we have 
$b_i-a_i=0.8 \geq 2\delta_i$.
Finally, for all $x \in \R^d$ such that $|x|\geq \exp^{\circ p}(1)$, we have 
$|x| \in [u_i,u_{i+1}]$ with $i=\lfloor \log^{\circ p} |x|\rfloor$, whence
$U(x) \in [2i,2i+3]$. Hence we clearly have $\log^{\circ p} |x|\leq U(x) 
\leq 3\log^{\circ p} |x|$ as soon as $|x|$ is large enough.
\end{proof}






\end{document}